\newtheorem{remark}{Remark}[section] 
\title{High-order  BDF convolution quadrature  for  subdiffusion models  with a singular source term}
\author{Jiankang Shi \thanks{School of Mathematics and Statistics, Gansu Key Laboratory of Applied Mathematics and Complex Systems,
 Lanzhou University, Lanzhou 730000, P.R. China.
Email address: shijk20@lzu.edu.cn}
 \and  Minghua Chen \thanks{School of Mathematics and Statistics, Gansu Key Laboratory of Applied Mathematics and Complex Systems,
 Lanzhou University, Lanzhou 730000, P.R. China.
Email address: chenmh@lzu.edu.cn}
}
\begin{document}

\maketitle

\begin{abstract}
Anomalous diffusion is often modelled in terms of the subdiffusion equation, which { can involve}  a weakly singular source  term.
For this case,  many predominant time stepping methods, including the correction of high-order  BDF schemes  [{\sc Jin, Li, and Zhou},
SIAM J. Sci. Comput., 39 (2017), A3129--A3152], may suffer from a severe order reduction.
 To fill in this gap,
we propose a smoothing method for time stepping schemes, where the singular term is regularized by using a $m$-fold integral-differential calculus
and the equation is discretized by the $k$-step BDF convolution quadrature, called ID$m$-BDF$k$ method.
We prove that the desired $k$th-order convergence  can be recovered  even if the source term is a weakly singular and the initial data is not compatible.
Numerical experiments illustrate the theoretical results.
 \end{abstract}

\begin{keywords}
subdiffusion equation, smoothing method, ID$m$-BDF$k$ method,  singular source term, error estimate
\end{keywords}


\pagestyle{myheadings}
\thispagestyle{plain}
\markboth{J. SHI AND M. CHEN}{ID$M$-BDF$K$ FOR SUBDIFFUSION WITH SINGULAR TERM}

\section{Introduction}\label{Se:intro}
{ In  this paper we study the convolution quadrature generated by $k$-step backward differentiation formulas (BDF$k$)
for solving the subdiffusion model  with a  weakly singular source term, whose prototype equation is,  for $0<\alpha<1$}
\begin{equation} \label{fee}
\begin{aligned}
{ \partial^{\alpha}_t (u(t)-v)} - A u(t)=g(t):=t^{\mu}\circ f(t),~~\mu>-1
\end{aligned}
\end{equation}
with the initial condition  $u(0)=v$. The operator $A$  denotes the Laplacian $\Delta$ on a convex polyhedral domain $\Omega \subset \mathbb{R}^d$  with a homogenous Dirichlet boundary condition,
and $\mathcal{D}(A) = H^{1}_{0}(\Omega) \cap H^{2}(\Omega)$, where $H^{1}_{0}(\Omega)$, $H^{2}(\Omega)$ denote the standard Sobolev spaces.  The symbol $\circ$ can be either the convolution $*$  or the  product, and the  Riemann-Liouville fractional derivative   is defined by \cite[p.\,62]{Podlubny:99}
$$\partial^{\alpha}_t u(t)= \frac{1}{\Gamma(1-\alpha)}\frac{d}{dt} \int^{t}_{0} {(t-\tau)^{-\alpha}u(\tau)} d\tau.$$
{ It makes sense to allow $\partial^{\alpha}_t u(t)$ to be singular at $t=0$ if $u$ is  absolutely continuous.
This leads to the fractional differential equations involving a singular  source term,
 see   \cite[Eq. (20)]{SBMB:03}, \cite[Eq. (7.24)]{Diethelm:2010},  \cite[Eq. (4.22)]{JinB:2021} and \cite[Eq. (1.6)]{McLean:19}. }

{ Problems of the model  \eqref{fee} arise in a variety of physical, chemical  and geophysical applications \cite{Doerries:22,Maryshev:09,SBMB:03,SMB:09}.
As an example, a singular fractal mobile/immobile model for solute transport   \cite{SBMB:03}   has important applications in practice,
which has been applied successfully to geophysical systems such as groundwater aquifers, rivers and porous media \cite{Doerries:22,SMB:09}.}

%

Numerical methods for the time discretization of  \eqref{fee} have been investigated  by various  authors.
Nowadays, there exist  two predominant time-stepping schemes  to restore the high-order convergence rate for { the}  model \eqref{fee}.
The first type is that  the  { nonuniform meshes (e.g., graded meshes, geometric meshes)} are employed to capture  the weak singularities at $t=0$  under  the appropriate regularity  of the solution, see \cite{CJB:2021,Kopteva:2021,Liao:2018,Mustapha:15,Mustapha:20,SOG:2017}.
The second type is convolution quadrature.  For example, { an} usual approach for the source term $g(t)$ in  \eqref{fee} is { to write }
$$
g(t)=g(0)+\sum^{k-1}_{l=1}\frac{t^{l}}{l!}\partial^{l}_{t}g(0)+\frac{t^{k-1}}{(k-1)!} \ast \partial^{k}_{t}g{  (t) }.
$$
Then approximating $g(0)=\partial_{t}J^1g(0)$ by $\partial_{\tau}J^1g(0)$ may yield a modified BDF2 method  with correction in the first step \cite{Cuesta:06}.
Further more, the correction of  high-order  BDF$k$  or  $L_k$ methods  are well developed in   \cite{JLZ:2017,SC:2020,SCYC:2022,YKF:2018} under   the mild  regularity   of the source function $g$. For the low regularity  source term   $g(t)=t^{\mu}$, $\mu>0$,
the correction of high-order BDF$k$ scheme{ s} converge  with the order $\mathcal{O}(\tau^{1+\mu})$, see Lemma 3.2 in \cite{WZ:2020}, which   may lose the high-order accuracy
and exhibit a severe order reduction.
For the weakly singular  source function $g(t)=t^{\mu}$, $\mu>-1$, a  second-order  time-stepping method {   is} provided in \cite{ZT:2022}
by performing  the integral-differential calculus on both sides of  \eqref{fee}.
For the general  { function}  $g(t)=t^{\mu}f(t)$, $\mu>-1$, the second-order schemes are  well developed in \cite{CSZ:2022}
just by performing  the integral-differential { operator}   for the source { term $g$}.
However, it may not offer an important insight into the causes of high-order  BDF convolution quadrature  for { the}  subdiffusion model \eqref{fee}.
For example, an optimal error estimate  of the Newton-Cotes \cite{CQSW:2021,Suli:03} rule $\mathcal{O}\left(\tau^{\min\{m+1,k\}}\right)$ for odd $m$ and $\mathcal{O}\left(\tau^{\min\{m+2,k\}}\right)$ for even $m$, $1\leq m\leq k\leq 6$,
are  difficult to  illustrate, see  Theorem \ref{addtheorema3.1}.

How to design/restore  the desired $k$th-order convergence rate of the $k$-step BDF ($k\leq 6$) convolution quadrature with a weakly singular  source term
for { the} model  \eqref{fee},  is still unavailable in the literature. To fill in this gap,
we propose { and analyze}  a smoothing method for { the} time stepping schemes, where the singular  term is regularized by using a $m$-fold integral-differential calculus
and the equation is discretized by the $k$-step BDF convolution quadrature, called ID$m$-BDF$k$ method or smoothing method.
We prove that the desired $k$th-order convergence  can be recovered  even if the source term is a weakly singular and the initial data is not compatible.
Numerical experiments illustrate the theoretical results.


\section{ID$m$-BDF$k$ Method (Smoothing Method)}\label{Se:corre}
Let $V(t)=u(t)-v$ with $V(0) = 0$. Then we can rewrite  \eqref{fee}     as
\begin{equation}\label{rfee}
 \partial^{\alpha}_{t} V(t) - A V(t)= Av + g(t), \quad 0<t\leq T.
\end{equation}
It is well-known that the  operator $A$ satisfies the  resolvent estimate  \cite{LST:1996,Thomee:2006}
\begin{equation*}\label{resolvent estimate}
\left\| (z-A)^{-1} \right\| \leq c |z|^{-1} \quad \forall z\in \Sigma_{{ \theta}}
\end{equation*}
for all ${ \theta}\in (\pi/2, \pi)$. Here $\Sigma_{\theta}:=\{ z\in \mathbb{C}\backslash \{0\}:|\arg z| < \theta \}$ is a sector of the complex plane $\mathbb{C}$.
It means that  $z^{\alpha}\in \Sigma_{\theta '}$, $\theta '=\alpha\theta<\theta<\pi$ for all $z\in \Sigma_{\theta}$ and
\begin{equation}\label{fractional resolvent estimate}
\big\| \left(z^{\alpha}-A\right)^{-1} \big\| \leq c |z|^{-\alpha} \quad \forall z\in \Sigma_{\theta}.
\end{equation}
{ Here and below $\left\|\cdot \right\|$ and $\left\|\cdot \right\|_{L^2(\Omega)}$  denote the operator norm  \cite[p.\,91]{Thomee:2006}
and usual  norm  \cite[p.\,2]{Thomee:2006} in the space $L^2(\Omega)$, respectively.}

\subsection{Discretization schemes}
Let the  $m$-fold integral calculus \cite[p.\,64]{Podlubny:99}
\begin{equation}\label{smoothing1}
G(t)=J^{m}g(t)=\frac{1}{\Gamma(m)}\int_0^t(t-\tau)^{m-1}g(\tau)d\tau=\frac{t^{m-1}}{\Gamma(m)} \ast g(t),~~1\leq m\leq k\leq 6.
\end{equation}
Note that  $G(t)$ is a { smooth} function and satisfies $G(0)=J^m g(t)|_{t=0}=0$. { Here $J$}  may map a singular point { of $g$}  to a zero point { of $G$}.
The model \eqref{rfee} { then}  becomes
\begin{equation}\label{rrfee}
 \partial^{\alpha}_{t} V(t) - A V(t)=\partial^{m}_{t} \left( \frac{t^{m}}{m!} Av + G(t)\right), \quad 0<t\leq T.
\end{equation}

Let $N \in \mathbb{N}$, $\tau=\frac{T}{N}$ be the uniform time step and $t_{n}=n \tau$,   $n=0,1,\cdots, N$ be a uniform partition of the interval $[0,T]$. Denote   $u^{n}$ as the approximated value  of $u(t)$
{  at $t=t_n$} and $g^{n}=g(t_{n})$.
The convolution quadrature generated by BDF$k$  approximates the  Riemann-Liouville fractional { derivative}   $\partial^{\alpha}_{t}\varphi(t_n)$ by
\begin{equation}\label{2.1}
\partial^{\alpha}_{\tau, k}\varphi^{n}=\frac{1}{\tau^{\alpha}}\sum^{n}_{j=0} \omega_{j}^{(\alpha, k)}\varphi^{n-j}, ~~1\leq k\leq 6.
\end{equation}
Here the weights $ \omega_{j}^{(\alpha, k)}$ are the coefficients in the series expansion
\begin{equation}\label{2.2}
\delta^{\alpha}_{\tau, k}(\xi)=\frac{1}{\tau^{\alpha}}\sum^{\infty}_{j=0} \omega_{j}^{(\alpha, k)}\xi^{j} \quad {\rm with} \quad \delta_{\tau, k}(\xi)=\frac{1}{\tau} \sum_{j=1}^{k}  \frac{1}{j} \left( 1-\xi \right)^{j},
\end{equation}
which can be computed  by the fast Fourier transform \cite[Chapter 7]{Podlubny:99} or recursion \cite{CYZ:21}.

Then ID$m$-BDF$k$ method for \eqref{rrfee} is designed by
\begin{equation}\label{2.3}
   \partial^{\alpha}_{\tau, k} V^{n} - AV^{n}= \partial^{m}_{\tau, k}  \left( \frac{t^{m}_{n}}{m!} Av + G^{n} \right),~~1\leq m\leq k\leq 6.
\end{equation}
\begin{remark}
For the time semi-discrete scheme{ s} \eqref{2.3}, we require  $v\in \mathcal{D}(A)$.
However one can use the schemes  \eqref{2.3} to prove the error estimates with the nonsmooth data $v\in L^2(\Omega)$, see Theorem \ref{Theorem5.8}.
In this work, we mainly focus on the time semi-discrete scheme{ s}   \eqref{2.3}, since the spatial discretization is well understood.
In fact, we can  choose $v_h=R_hv$ if $v\in \mathcal{D}(A)$ and $v_h=P_hv$ if $v\in L^2(\Omega)$, see \cite{SCYC:2022,Thomee:2006,WYY:2020}.
\end{remark}

\subsection{Continuous  solution representation for \eqref{rrfee}}
Applying the Laplace transform in  \eqref{rrfee}, it yields
\begin{equation*}
\widehat{V}(z)= (z^{\alpha}-A)^{-1}\left( z^{-1}Av   + z^{m} \widehat{G}(z) \right).
\end{equation*}
By the inverse Laplace transform, we obtain  \cite{JLZ:2017}
\begin{equation}\label{LT}
V(t) =  \frac{1}{2\pi i} \int_{\Gamma_{\theta, \kappa}} e^{zt}  (z^{\alpha}-A)^{-1}\left( z^{-1}Av   + z^{m} \widehat{G}(z) \right) dz
\end{equation}
with
\begin{equation}\label{Gamma}
\Gamma_{\theta, \kappa}=\{ z\in \mathbb{C}: |z|=\kappa, |\arg z|\leq \theta \} \cup \{ z\in \mathbb{C}: z=re^{\pm i\theta}, r\geq \kappa \}
\end{equation}
and  $\theta \in (\pi/2, \pi)$, $\kappa>0$.

\subsection{Discrete solution representation  for \eqref{2.3}}
Given a sequence $(\kappa^n)_0^\infty$  { we denote by}
$$\widetilde{\kappa}(\zeta)=\sum_{n=0}^{\infty}\kappa^n \zeta^n$$
its generating power series.  { Then we have the following result}.
\begin{lemma}\label{Lemma2.1}
Let $\delta_{\tau, k}$ be  given in  \eqref{2.2} and  $G(t)=J^{m}g(t)$, $1\leq m\leq k\leq 6$ in \eqref{smoothing1}.
Then the discrete solution of \eqref{2.3} is  represented by
\begin{equation*}
V^{n}=\frac{\tau}{2\pi i}\int_{\Gamma^{\tau}_{\theta,\kappa}} e^{zt_n} \left( \delta^{\alpha}_{\tau, k}(e^{-z\tau}) -A\right)^{-1} \delta^{m}_{\tau, k}(e^{-z\tau}) \left( \frac{\gamma_{m}(\xi)}{m!} \tau^{m} Av + \widetilde{G} (e^{-z\tau}) \right)dz
\end{equation*}
with $\Gamma^{\tau}_{\theta, \kappa}=\{z\in \Gamma_{\theta, \kappa}: |\Im z|\leq \pi / \tau\}$ and
$ \gamma_{m}(\xi)=\sum^{\infty}_{n=1}n^{m} \xi^{n}=\left( \xi\frac{d}{d\xi} \right)^{m} \frac{1}{1-\xi}$.
\end{lemma}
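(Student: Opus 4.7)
The plan is to pass the scheme \eqref{2.3} to generating functions in $\zeta$, solve an algebraic operator equation for $\widetilde V(\zeta)$, and then recover $V^n$ by contour integration after the standard substitution $\zeta = e^{-z\tau}$.

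First, I multiply \eqref{2.3} by $\zeta^n$ and sum over $n\ge 0$. By the definition \eqref{2.1}--\eqref{2.2}, the convolution quadrature turns into multiplication by its symbol at the level of generating functions, so $\sum_n (\partial^{\alpha}_{\tau,k} V^n)\zeta^n = \delta^{\alpha}_{\tau,k}(\zeta)\widetilde V(\zeta)$, and analogously for $\partial^m_{\tau,k}$ (noting $\delta^{m}_{\tau,k}(\zeta) = [\delta_{\tau,k}(\zeta)]^m$). A direct computation yields
$$\sum_{n=0}^{\infty} \frac{t_n^m}{m!}\zeta^n \;=\; \frac{\tau^m}{m!}\sum_{n=1}^\infty n^m\zeta^n \;=\; \frac{\tau^m}{m!}\gamma_m(\zeta),$$
using the expression for $\gamma_m$ supplied in the statement. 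Since $V^0=0$, there are no starting-value corrections, and the transformed equation reads
$$\bigl(\delta^{\alpha}_{\tau,k}(\zeta) - A\bigr)\widetilde V(\zeta) \;=\; \delta^{m}_{\tau,k}(\zeta)\left(\frac{\tau^m}{m!}\,\gamma_m(\zeta)\,Av + \widetilde G(\zeta)\right).$$

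Next I invert the operator on the left. A standard stability argument for BDF$k$ with $k\le 6$ shows that $\delta^{\alpha}_{\tau,k}(\zeta)\in\Sigma_{\theta}$ whenever $|\zeta|$ is sufficiently small, so by the resolvent bound \eqref{fractional resolvent estimate} the inverse $(\delta^{\alpha}_{\tau,k}(\zeta)-A)^{-1}$ exists and is uniformly bounded on a disk $|\zeta|\le\rho$. Hence $\widetilde V(\zeta)$ is well defined and analytic there as an operator-valued function, and the Cauchy coefficient formula yields
$$V^n \;=\; \frac{1}{2\pi i}\oint_{|\zeta|=\rho}\widetilde V(\zeta)\,\zeta^{-n-1}\,d\zeta.$$

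Finally, I apply the change of variables $\zeta = e^{-z\tau}$, with $d\zeta = -\tau e^{-z\tau}\,dz$, which maps the circle $|\zeta|=\rho = e^{-\kappa\tau}$ onto the vertical segment $\{\Re z = \kappa,\ |\Im z|\le \pi/\tau\}$ and converts $\zeta^{-n-1}d\zeta$ into $-\tau e^{zt_n}dz$; deforming this segment into the truncated Hankel contour $\Gamma^{\tau}_{\theta,\kappa}$ of \eqref{Gamma} produces exactly the claimed representation. The step I expect to be the main obstacle is the contour deformation: one must verify analyticity of the integrand in the strip $|\Im z|\le\pi/\tau$ to the left of the initial segment, use the $2\pi/\tau$-periodicity of $e^{-z\tau}$ in $\Im z$ to cancel the contributions on the horizontal truncation lines, and check that $\delta^{\alpha}_{\tau,k}(e^{-z\tau})-A$ remains invertible along the tilted rays with sufficient decay to justify the shift — this is the same mechanism underlying the BDF analyses in \cite{JLZ:2017,SCYC:2022}.
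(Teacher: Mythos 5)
Your proposal is correct and follows essentially the same route as the paper: multiply the scheme by $\xi^n$, sum using $V^0=0$, $t_0^m=0$, $G^0=0$ to turn the quadrature into multiplication by its symbol and obtain $\widetilde V(\xi) = (\delta^{\alpha}_{\tau,k}(\xi)-A)^{-1}\delta^{m}_{\tau,k}(\xi)\bigl(\tfrac{\gamma_m(\xi)}{m!}\tau^m Av + \widetilde G(\xi)\bigr)$, then recover $V^n$ via Cauchy's coefficient formula, the substitution $\xi=e^{-z\tau}$, and deformation to $\Gamma^{\tau}_{\theta,\kappa}$. The paper simply cites \cite{JLZ:2017} for the final contour-deformation step that you spell out in more detail.
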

\begin{proof}
Multiplying  \eqref{2.3} by $\xi^{n}$ and summing over $n$, we obtain
\begin{equation*}
\begin{split}
&\sum^{\infty}_{n=1} \partial^{\alpha}_{\tau, k} V^{n} \xi^{n} - \sum^{\infty}_{n=1}  AV^{n}  \xi^{n} = \sum^{\infty}_{n=1} \partial^{m}_{\tau, k} \left( \frac{t^{m}_{n}}{m!} Av + G^{n}\right) \xi^{n}.
\end{split}
\end{equation*}
From    \eqref{2.1}, \eqref{2.2} and   $V^0=0$, there exists
\begin{equation*}
\begin{split}
\sum^{\infty}_{n=1} \partial^{\alpha}_{\tau, k} V^{n} \xi^{n}
=& \sum^{\infty}_{n=1} \frac{1}{\tau^{\alpha}}\sum^{n}_{j=0}  \omega_{j}^{(\alpha, k)} V^{n-j} \xi^{n}
{ =  \frac{1}{\tau^{\alpha}} \sum^{\infty}_{j=0}   \omega_{j}^{(\alpha, k)}  \xi^{j} \sum^{\infty}_{n=0}  V^{n} \xi^{n}}
=  \delta^{\alpha}_{\tau, k} (\xi) \widetilde{V} (\xi).
\end{split}
\end{equation*}
Similarly,  by the identities $\gamma_{m}(\xi)=\sum^{\infty}_{n=1}n^{m} \xi^{n}$,  $m\geq 1$ and $G^0=G(0)=0$, we get
\begin{equation*}
\sum^{\infty}_{n=1} \partial^{m}_{\tau, k} t^{m}_{n} Av  \xi^{n} = \delta^{m}_{\tau, k}(\xi) \gamma_{m}(\xi) \tau^{m} Av, \quad
\sum^{\infty}_{n=1} \partial^{m}_{\tau, k}  G^{n} \xi^{n} =  \delta^{m}_{\tau, k}(\xi) \widetilde{G} (\xi).
\end{equation*}

According to   the above equations,  it yields
\begin{equation}\label{ads2.17}
\widetilde{V}(\xi) =   \left( \delta^{\alpha}_{\tau, k}(\xi) -A\right)^{-1} \delta^{m}_{\tau, k}(\xi) \left( \frac{\gamma_{m}(\xi)}{m!} \tau^{m} Av + \widetilde{G} (\xi) \right).
\end{equation}
{ From}   Cauchy's integral formula, and the change of variables $\xi=e^{-z\tau}$, and  Cauchy's theorem, it implies  \cite{JLZ:2017}
\begin{equation}\label{DLT}
V^{n}=\frac{\tau}{2\pi i}\int_{\Gamma^{\tau}_{\theta,\kappa}}\!\!\!\! e^{zt_n} \left( \delta^{\alpha}_{\tau, k}(e^{-z\tau}) -A\right)^{-1} \delta^{m}_{\tau, k}(e^{-z\tau}) \!\left[ \frac{\gamma_{m}(\xi)}{m!} \tau^{m} Av + \widetilde{G} (e^{-z\tau}) \right]dz
\end{equation}
with $\Gamma^{\tau}_{\theta, \kappa}=\{z\in \Gamma_{\theta, \kappa}: |\Im z|\leq \pi / \tau\}$.
The proof is completed.
\end{proof}

\section{Convergence analysis: General source function $g(t)$}\label{Se:conver}
Based on the { framework}   of convolution quadrature \cite{CSZ:2022,JLZ:2017,SC:2020}, we first
provide the detailed error { analysis}   for  { the} subdiffusion model  \eqref{rfee} with the general source function $g(t)$.
\subsection{A few technical lemmas}
We give some lemmas that will be used.
{ First, we need a few estimates on $\delta_{\tau, k}(e^{-z\tau})$ in \eqref{2.2}.}
\begin{lemma}\cite{JLZ:2017}\label{Lemma 2.3}
Let $\delta_{\tau, k}(\xi)$ with $k \leq 6$ be given in \eqref{2.2}. Then there  exist the positive constants $c_{1},c_{2}$, $c$ and
$\theta \in (\pi/2, \theta_{\varepsilon})$ with  $\theta_{\varepsilon} \in (\pi/2, \pi),~\forall \varepsilon>0$ such that
\begin{equation*}
\begin{split}
& c_{1}|z|\le |\delta_{\tau, k}(e^{-z\tau})| \le c_{2}|z|, \quad   |\delta_{\tau, k}(e^{-z\tau})-z|\le c \tau^{k}|z|^{k+1}, \\
& |\delta^{\alpha}_{\tau, k}(e^{-z\tau})-z^{\alpha}|\le c \tau^{k}|z|^{k+\alpha},~\delta_{\tau, k}(e^{-z\tau}) \in \Sigma_{\pi/2+\varepsilon} \quad {\forall} z\in \Gamma^{\tau}_{\theta,\kappa},
\end{split}
\end{equation*}
where  $\theta\in (\pi/2,\pi)$ is  sufficiently close to $\pi/2$.
\end{lemma}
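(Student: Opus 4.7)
The plan is to derive all four estimates from two well-known ingredients about the BDF$k$ generating polynomial: a Taylor/logarithm expansion on one hand, and classical $A(\theta_k)$-stability of BDF$k$ for $k\le 6$ on the other. Throughout, I will exploit that $\xi=e^{-z\tau}$ maps the truncated contour $\Gamma^\tau_{\theta,\kappa}$ to a compact set in the $\xi$-plane, so that $|z\tau|$ is uniformly bounded (the cutoff $|\Im z|\le\pi/\tau$ in the definition of $\Gamma^\tau_{\theta,\kappa}$ is exactly what makes this possible).

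First I would prove the consistency estimate $|\delta_{\tau,k}(e^{-z\tau})-z|\le c\tau^k|z|^{k+1}$. The key identity is that $\delta_{\tau,k}(\xi)=\frac{1}{\tau}\sum_{j=1}^k\frac{(1-\xi)^j}{j}$ is the order-$k$ truncation of the Maclaurin series for $\frac{1}{\tau}\log\frac{1}{\xi}$ in the variable $1-\xi$. Since $\frac{1}{\tau}\log\frac{1}{e^{-z\tau}}=z$ exactly, one gets
\begin{equation*}
\delta_{\tau,k}(e^{-z\tau})-z=-\frac{1}{\tau}\sum_{j=k+1}^{\infty}\frac{(1-e^{-z\tau})^j}{j},
\end{equation*}
and the bound follows from $|1-e^{-z\tau}|\le C|z\tau|$ together with the geometric tail. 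The estimate for $\delta^\alpha_{\tau,k}(e^{-z\tau})-z^\alpha$ is then obtained by writing $\delta^\alpha_{\tau,k}(e^{-z\tau})=z^\alpha(1+\eta)^\alpha$ with $\eta=O(\tau^k|z|^k)$ and Taylor-expanding $(1+\eta)^\alpha$; this step implicitly needs the sector containment to single out the principal branch of the $\alpha$-power.

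Next I would obtain the sector property $\delta_{\tau,k}(e^{-z\tau})\in\Sigma_{\pi/2+\varepsilon}$ and the two-sided bound. For $k\le 6$ the BDF$k$ scheme is $A(\theta_k)$-stable with strictly positive angle $\theta_k$ (the Dahlquist angles, close to $\pi/2$ for small $k$), which is exactly the statement that the image of $\{|\xi|\le 1\}$ under $\delta_{\tau,k}$ lies in $\Sigma_{\pi-\theta_k}$; choosing the original $\theta\in(\pi/2,\pi)$ sufficiently close to $\pi/2$ and using that $\xi=e^{-z\tau}$ maps $\Gamma^\tau_{\theta,\kappa}$ into a subset of $\{|\xi|\le 1\}$ near $\xi=1$, one pushes the image into $\Sigma_{\pi/2+\varepsilon}$ for any prescribed $\varepsilon>0$. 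The upper bound $|\delta_{\tau,k}(e^{-z\tau})|\le c_2|z|$ is the trivial termwise estimate using $|1-e^{-z\tau}|\le C|z\tau|$. The lower bound splits: for $|z\tau|$ small, it follows from the consistency estimate $\delta_{\tau,k}(e^{-z\tau})=z+O(\tau^k|z|^{k+1})$; for $|z\tau|$ bounded below by a positive constant, continuity of $z\mapsto\delta_{\tau,k}(e^{-z\tau})/z$ on the corresponding compact portion of the contour together with non-vanishing (see below) gives a positive infimum.

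The main obstacle, and the only genuinely non-elementary point, is showing that $\delta_{\tau,k}(e^{-z\tau})/z$ does not vanish on the part of $\Gamma^\tau_{\theta,\kappa}$ where $|z\tau|$ is of order unity; equivalently, that the polynomial $\sum_{j=1}^k\frac{(1-\xi)^j}{j}$ has no zero on the image of the truncated contour inside the closed unit disc. This is ultimately a zero-free-region statement for the characteristic polynomials of BDF$k$, and it is precisely the ingredient one needs from Dahlquist's theory; for $k\le 6$ it holds in a neighborhood of the arc $\{e^{-z\tau}:z\in\Gamma^\tau_{\theta,\kappa}\}$ provided $\theta$ is chosen sufficiently close to $\pi/2$. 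Once this is in hand, packaging everything together yields the lemma in the stated form.
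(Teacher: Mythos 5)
First, a point of reference: the paper does not prove this lemma at all --- it is imported verbatim from \cite{JLZ:2017} (Lemma~B.1 there) --- so your proposal has to stand on its own. The consistency part of your plan is sound in outline: $\delta_{\tau,k}$ is indeed the order-$k$ truncation of $\frac{1}{\tau}\log\frac{1}{\xi}$ in powers of $1-\xi$, and the tail-series identity gives the bound wherever $|1-e^{-z\tau}|<1$. But on the outer portion of $\Gamma^{\tau}_{\theta,\kappa}$ one has $|z\tau|$ up to $\pi/\sin\theta$ and $|1-e^{-z\tau}|$ can exceed $1$ (it is close to $2$ near $z\tau=\pm i\pi$), so your series representation of the remainder diverges there; you need the same compactness argument you invoke for the lower bound (on that range $\tau^{k}|z|^{k+1}\gtrsim \tau^{-1}$ while $|\delta_{\tau,k}(e^{-z\tau})-z|\lesssim\tau^{-1}$, so the estimate holds with an adjusted constant). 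Also note that on the rays of $\Gamma^{\tau}_{\theta,\kappa}$ one has $\Re(z\tau)<0$, hence $|e^{-z\tau}|>1$: the relevant $\xi$ fill a neighborhood of essentially the whole unit circle, partly \emph{outside} the closed disc and not at all confined near $\xi=1$, so every property you import for $\{|\xi|\le 1\}$ (including the zero-free region coming from zero-stability) must be extended by continuity to such a neighborhood.

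The genuine gap is the sector containment. $A(\vartheta_k)$-stability says exactly that $\delta_k(\{|\xi|\le1\})\subset\Sigma_{\pi-\vartheta_k}$, and for $k\ge3$ the angle $\pi-\vartheta_k$ exceeds $\pi/2$ (about $94^{\circ}$ for $k=3$, $107^{\circ}$ for $k=4$, $162^{\circ}$ for $k=6$); moreover these extremal arguments are attained on the unit circle itself, since $\arg\delta_k$ is harmonic on the disc away from the simple zero at $\xi=1$. Concretely, $\tau\,\delta_{\tau,4}(i)=-\tfrac23-\tfrac83 i$ and $\tau\,\delta_{\tau,6}(i)=-\tfrac{22}{15}-\tfrac{8}{15}i$ have negative real part, and $\xi=\pm i$ are limits of $e^{-z\tau}$ for $z\in\Gamma^{\tau}_{\theta,\kappa}$ as $\theta\to\pi/2^{+}$. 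So ``choosing $\theta$ close to $\pi/2$'' cannot push the image into $\Sigma_{\pi/2+\varepsilon}$ for arbitrarily small $\varepsilon$: the stability input you cite delivers only $\delta_{\tau,k}(e^{-z\tau})\in\Sigma_{\pi-\vartheta_k+\epsilon}$, not the statement as written. (That weaker containment is in fact all the subsequent resolvent estimates need, since the Dirichlet Laplacian is sectorial in every $\Sigma_{\theta'}$ with $\theta'<\pi$ and the power $\alpha<1$ shrinks the angle further; but then you must say so and check it downstream, or else verify the required angular bound directly, method by method for $k\le 6$, as the source reference effectively does.) As it stands, this step of your argument claims more than its ingredients can deliver, and for $k\ge3$ the stronger containment is not available on this contour.
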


{ To provide an optimal error estimate  of the Newton-Cotes  rule for  ID$m$-BDF$k$,
the following lemmas  will play an important role.}

%
%
\begin{lemma}\label{Lemma nn3.3} 
Let $\gamma_{l}(\xi)=\sum^{\infty}_{n=1}n^{l} \xi^{n}$
with $l=0, 1, 2, \ldots, 2k$, $k\leq 6$. Then there { exists} a  positive constant $c$ such that
\begin{equation*}
\left| \frac{\gamma_{l}(e^{-z\tau})}{l!}  \tau^{l+1} - \frac{1}{z^{l+1}} \right| \leq
\left\{ \begin{array}{l@{\quad} l}
c \tau^{l+1},  \qquad &l=0~~{\rm or}~~ l=1,3,\cdots,2k-1,\\
c \tau^{l+2}|z| , \quad& l=2,4,\cdots,2k.
\end{array}\right.
\end{equation*}
\end{lemma}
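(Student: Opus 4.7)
The plan is to rewrite the quantity as an $l$th derivative of the single function $F(z):=\frac{\tau}{e^{z\tau}-1}-\frac{1}{z}$ and then read off the two cases from its Bernoulli-number expansion. Termwise differentiation of $\sum_{n\ge 1}e^{-nz\tau}=(e^{z\tau}-1)^{-1}$ (valid for $\Re z>0$ and extended to the contour by analytic continuation) yields $\gamma_l(e^{-z\tau})=(-1)^l \tau^{-l}\frac{d^l}{dz^l}(e^{z\tau}-1)^{-1}$, while $\frac{1}{z^{l+1}}=\frac{(-1)^l}{l!}\frac{d^l}{dz^l}\frac{1}{z}$. Combining these,
$$\frac{\gamma_l(e^{-z\tau})\tau^{l+1}}{l!}-\frac{1}{z^{l+1}}=\frac{(-1)^l}{l!}\frac{d^l}{dz^l}F(z),$$
so it suffices to bound the right-hand side.

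The scaling $s=z\tau$ gives $F(z)=\tau\phi(s)$ with $\phi(s):=(e^s-1)^{-1}-s^{-1}$, so $\frac{d^l}{dz^l}F(z)=\tau^{l+1}\phi^{(l)}(z\tau)$. The key structural input is the standard Bernoulli expansion
$$\phi(s)=-\tfrac12+\sum_{j=1}^{\infty}\frac{B_{2j}}{(2j)!}\,s^{2j-1},$$
which contains only the constant $-1/2$ together with odd powers of $s$. Hence $\phi^{(l)}(0)=0$ for every even $l\ge 2$, whereas $\phi^{(l)}(0)$ is a finite constant when $l=0$ or $l$ is odd. Moreover $\phi$ is meromorphic on $\mathbb{C}$ with poles only at $s=2\pi i k$, $k\in\mathbb{Z}\setminus\{0\}$.

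I would conclude by exploiting the truncation $|\Im z|\le\pi/\tau$ of the contour $\Gamma^\tau_{\theta,\kappa}$: it forces $|z\tau|\le\pi/\sin\theta$, so $z\tau$ ranges over a fixed compact subset of $\Sigma_\theta$ uniformly bounded away from $\pm 2\pi i$. On this compact set Cauchy's estimates yield $|\phi^{(l)}(z\tau)|\le c$ for every $l\le 2k$, producing the bound $c\tau^{l+1}$ when $l=0$ or $l$ is odd. For $l$ even with $l\ge 2$, the vanishing $\phi^{(l)}(0)=0$ combined with analyticity upgrades this to $|\phi^{(l)}(z\tau)|\le c|z\tau|$ (because $\phi^{(l)}(s)/s$ is analytic on the region), giving $|\frac{d^l}{dz^l}F(z)|\le c\tau^{l+2}|z|$. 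Dividing by $l!$ absorbs into the constant and yields the claimed estimate. The only genuine subtlety is this last parity step—correctly converting the vanishing of the even-order Bernoulli coefficients into the extra factor $|z|$—but once the reduction to $\phi^{(l)}(z\tau)$ is in hand, the rest is routine.
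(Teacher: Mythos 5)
Your proof is correct, but it follows a genuinely different route from the paper's. The paper writes $\gamma_l(\xi)$ explicitly as an Eulerian-polynomial quotient $\sum_j a_{l,j}\xi^j/(1-\xi)^{l+1}$, invokes the palindromic symmetry $a_{l,j}=a_{l,l+1-j}$ to recast $\frac{\gamma_l(e^{-\eta})}{l!}\eta^{l+1}$ as a ratio of two \emph{even} power series in $\eta$, and obtains the parity dichotomy from the (asserted, not fully verified) coefficient matching $c_{2i}=d_{2i}$ for $2i\le l$. You instead collapse all values of $l$ into derivatives of the single function $\phi(s)=(e^{s}-1)^{-1}-s^{-1}$ via $\frac{\gamma_l(e^{-z\tau})}{l!}\tau^{l+1}-z^{-l-1}=\frac{(-1)^l}{l!}\tau^{l+1}\phi^{(l)}(z\tau)$, and read the dichotomy off the Bernoulli expansion $\phi(s)=-\tfrac12+\sum_{j\ge1}\frac{B_{2j}}{(2j)!}s^{2j-1}$: the absence of even powers $s^{2j}$, $j\ge1$, is exactly the vanishing $\phi^{(l)}(0)=0$ for even $l\ge2$ that buys the extra factor $|z\tau|$. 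Your reduction is arguably cleaner and more self-contained, since the parity input is a single classical generating-function identity rather than a family of combinatorial symmetries, and the uniform bound then follows from Cauchy estimates on a fixed compact subset of the disk $|s|<2\pi$. Two small points you should make explicit: (i) the identity $\sum_{n\ge1}n^l e^{-nz\tau}=(-\tau)^{-l}\tfrac{d^l}{dz^l}(e^{z\tau}-1)^{-1}$ holds literally only for $\Re z>0$, so on the portions of $\Gamma^{\tau}_{\theta,\kappa}$ with $\Re z<0$ you must (as you note) interpret $\gamma_l(e^{-z\tau})$ as the analytic continuation, i.e.\ the rational function; and (ii) the containment $|z\tau|\le\pi/\sin\theta<2\pi$, which keeps $z\tau$ away from the poles $\pm2\pi i$, requires $\sin\theta>1/2$ — this is guaranteed because the paper already takes $\theta$ sufficiently close to $\pi/2$ (Lemma 3.2), but the dependence deserves a word.
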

\begin{proof}
Taking $\xi=e^{-z\tau}$, we get
\begin{equation*}
\gamma_{l}(\xi)=\sum^{\infty}_{n=1}n^{l} \xi^{n} = \frac{\sum_{j=0}^{l} a_{l,j} \xi^{l+1-j} } {(1-\xi)^{l+1} },~~l\geq 0
\end{equation*}
with  $a_{0,0}=1$, $a_{0,1}=0$ and
\begin{equation*}
a_{l,j} = j a_{l-1,j} + (l+1-j) a_{l-1,j-1},~ a_{l,0} = a_{l,l+1} = 0,~ l\geq 1.
\end{equation*}
 In particular, we have  $a_{l,j}=a_{l,l+1-j}$, $l\geq 1$ and
\begin{equation*}
\gamma_{l}(\xi)= \frac{\sum_{j=1}^{l} a_{l,l+1-j} \xi^{l+1-j} } {(1-\xi)^{l+1} }
= \frac{\sum_{j=1}^{l} a_{l,j} \xi^{j} } {(1-\xi)^{l+1} }.
\end{equation*}

By the simple calculation, it yields
\begin{equation*}
\left| \frac{\gamma_{l}(e^{-\eta})}{l!} \eta^{l+1} - 1 \right| \leq \left\{ \begin{array}
 {l@{\quad} l}
  c |\eta|^{l+1},   ~~&l=0~~{\rm or}~~ l=1,3,\cdots,2k-1,\\
  c |\eta|^{l+2} ,  ~~&l=2,4,\cdots,2k,
 \end{array}
 \right.
\end{equation*}
since
\begin{equation*}
\begin{split}
  \frac{\gamma_{l}(e^{-\eta})}{l!} \eta^{l+1} { =\frac{ e^{-\eta}}{\left(1-e^{-\eta}\right)\eta^{-1}}}=
  \frac{1-\frac{\eta}{1!}+\frac{\eta^2}{2!}-\frac{\eta^3}{3!}+\cdots}{ 1-\frac{\eta}{2!}+\frac{\eta^2}{3!}-\frac{\eta^3}{4!}+\cdots},~~~~l=0,
\end{split}
\end{equation*}
and
$$  \frac{\gamma_{l}(e^{-\eta})}{l!} \eta^{l+1}
=\frac{\frac{1}{l!} \sum_{j=1}^la_{l,j}e^{\left( \frac{l+1}{2} -j\right)\eta}}{\left(e^{\frac{\eta}{2}}-e^{-\frac{\eta}{2}}\right)^{l+1}\eta^{-(l+1)}}
=\frac{1+c_2\eta^2+c_4\eta^4+c_6\eta^6+\cdots}{1+d_2\eta^2+d_4\eta^4+d_6\eta^6+\cdots} \quad \forall l\geq 1 $$
with  $c_{2i}=d_{2i}$, $2i\leq l$.
Here the coefficients $c_{2i}$ and $d_{2i}$, $i=1,2,\ldots$,  are computed   by
$$c_{2i}=\frac{1}{l!}\sum_{j=1}^la_{l,j}\frac{1}{(2i)!}  \left( \frac{l+1}{2}-j\right)^{2i},$$
and
$$d_{2i}=\frac{1}{(2i+l+1)!}\sum_{j=0}^{l+1}(-1)^j\binom{l+1}{j}\left( \frac{l+1}{2}-j\right)^{2i+l+1}.$$
The proof is completed.
\end{proof}

\begin{lemma}\label{Lemma 3.4}
Let $\delta_{\tau, k}(\xi)$ be given in \eqref{2.2} and $\gamma_{l}(\xi)=\sum^{\infty}_{n=1}n^{l} \xi^{n}$
with $l=0, 1, \ldots, k+m$, $1\leq m\leq k\leq 6$. Then there { exists} a  positive constant $c$ such that
\begin{equation*}
\left|\delta^{m}_{\tau, k}(e^{-z\tau}) \frac{\gamma_{l}(e^{-z\tau})}{l!} \tau^{l+1} -\frac{z^m}{z^{l+1}}  \right|\leq \left\{ \begin{array}
 {l@{\quad} l}
  c \tau^{l+1} \left| z \right|^{m} + c \tau^{k}|z|^{k+m-l-1}, & l=0,1,3,\cdots,\\ \\
   c \tau^{l+2} \left| z \right|^{m+1} + c \tau^{k}|z|^{k+m-l-1},&l=2,4,\cdots.
 \end{array}
 \right.
\end{equation*}
\end{lemma}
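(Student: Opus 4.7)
The plan is to reduce Lemma 3.4 to Lemma \ref{Lemma 2.3} and Lemma \ref{Lemma nn3.3} via a single ``add-and-subtract'' identity, then bound each of the two resulting pieces separately.

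First I would split the quantity of interest as
\begin{equation*}
\delta^{m}_{\tau, k}(e^{-z\tau}) \frac{\gamma_{l}(e^{-z\tau})}{l!} \tau^{l+1} -\frac{z^m}{z^{l+1}}
= \mathrm{I} + \mathrm{II},
\end{equation*}
with
\begin{equation*}
\mathrm{I} = \delta^{m}_{\tau, k}(e^{-z\tau})\left(\frac{\gamma_{l}(e^{-z\tau})}{l!}\tau^{l+1} - \frac{1}{z^{l+1}}\right), \qquad
\mathrm{II} = \frac{1}{z^{l+1}}\left(\delta^{m}_{\tau, k}(e^{-z\tau}) - z^{m}\right).
\end{equation*}
For $\mathrm{I}$, the first factor is controlled by Lemma \ref{Lemma 2.3}, which yields $|\delta^{m}_{\tau, k}(e^{-z\tau})|\le c_{2}^{m}|z|^{m}$, and the second factor is controlled directly by Lemma \ref{Lemma nn3.3}, producing a bound of order $\tau^{l+1}$ when $l=0$ or $l$ is odd and of order $\tau^{l+2}|z|$ when $l$ is even. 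Multiplying the two estimates yields exactly the first summand $c\tau^{l+1}|z|^{m}$ or $c\tau^{l+2}|z|^{m+1}$ appearing in the two cases of the stated bound.

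For $\mathrm{II}$, I would prove the auxiliary consistency estimate
\begin{equation*}
\bigl|\delta^{m}_{\tau, k}(e^{-z\tau}) - z^{m}\bigr|\le c\tau^{k}|z|^{k+m}, \quad z\in \Gamma^{\tau}_{\theta,\kappa},
\end{equation*}
by the factorization
\begin{equation*}
\delta^{m}_{\tau, k}(e^{-z\tau}) - z^{m} = \bigl(\delta_{\tau, k}(e^{-z\tau}) - z\bigr)\sum_{j=0}^{m-1} \delta^{m-1-j}_{\tau, k}(e^{-z\tau})\, z^{j},
\end{equation*}
together with the two bounds $|\delta_{\tau, k}(e^{-z\tau}) - z|\le c\tau^{k}|z|^{k+1}$ and $|\delta_{\tau, k}(e^{-z\tau})|\le c_{2}|z|$ from Lemma \ref{Lemma 2.3}. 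Dividing by $z^{l+1}$ then gives a contribution of $c\tau^{k}|z|^{k+m-l-1}$, matching the second summand in both cases of the statement.

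The only mildly delicate point is the consistency estimate for integer $m$; Lemma \ref{Lemma 2.3} states the analogous bound for the fractional power $\alpha$, but because $m$ is a positive integer the telescoping identity above provides an elementary derivation without any contour argument. Everything else is a straightforward application of the triangle inequality and the product bound just described, so I do not anticipate additional obstacles; the cleanest presentation is to write the identity $\mathrm{I}+\mathrm{II}$, quote Lemmas \ref{Lemma 2.3} and \ref{Lemma nn3.3}, and collect the constants.
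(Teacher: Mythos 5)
Your proposal is correct and follows essentially the same route as the paper: the paper splits the quantity into the same two pieces (calling them $J_1$ and $J_2$), bounds $J_1$ by combining Lemma \ref{Lemma 2.3} with Lemma \ref{Lemma nn3.3}, and bounds $J_2$ by $c\tau^{k}|z|^{k+1}|z|^{m-1}|z|^{-l-1}$, which is exactly the estimate your telescoping factorization delivers. Your explicit derivation of $|\delta^{m}_{\tau,k}(e^{-z\tau})-z^{m}|\le c\tau^{k}|z|^{k+m}$ is a welcome clarification of a step the paper leaves implicit, but it is not a different argument.
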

\begin{proof}
Let
\begin{equation*}
\delta^{m}_{\tau, k}(e^{-z\tau}) \frac{\gamma_{l}(e^{-z\tau})}{l!} \tau^{l+1} -\frac{z^m}{z^{l+1}}= J_{1} + J_{2}
\end{equation*}
with
\begin{equation*}
J_{1}= \delta^{m}_{\tau, k}\left(e^{-z\tau}\right) \left( \frac{\gamma_{l}(e^{-z\tau})}{l!}  \tau^{l+1} - \frac{1}{z^{l+1}} \right) \quad {\rm and} \quad
J_{2}=  \frac{\delta^{m}_{\tau, k}(e^{-z\tau})-z^m}{z^{l+1}}.
\end{equation*}
From  Lemmas \ref{Lemma 2.3} and \ref{Lemma nn3.3},  it leads to
\begin{equation*}
\left|J_{1} \right|\leq \left\{ \begin{array}
 {l@{\quad} l}
  c \tau^{l+1} \left| z \right|^{m} , & l=0~~{\rm or}~~ l=1,3,\cdots,2k-1,\\
   c \tau^{l+2} \left| z \right|^{m+1},& l=2,4,\cdots,2k;
 \end{array}
 \right.
\end{equation*}
and
\begin{equation*}
\left|J_{2} \right|    \leq  c \tau^{k}|z|^{k+1} |z|^{m-1}|z|^{-l-1}  \leq  c \tau^{k}|z|^{k+m-l-1}.
\end{equation*}
By the triangle inequality,  the desired result is obtained.
\end{proof}

{ From  Lemmas \ref{Lemma 2.3}-\ref{Lemma 3.4}, we have the following results, which will be used in the global convergence  analysis.}
\begin{lemma}\label{Lemma 3.11}
Let $\delta^{\alpha}_{\tau, k}$  be given in \eqref{2.2}  and $\gamma_{l}(\xi)=\sum^{\infty}_{n=1}n^{l} \xi^{n}$,
 $l=0,1, \ldots, k+m$, $1\leq m\leq k\leq 6$. Then there { exists} a  positive constant $c$ such that
 \begin{equation*}
\left\|K(z)  \right\|\leq \left\{ \begin{array}
 {l@{\quad} l}
  c \tau^{l+1} \left| z \right|^{m-\alpha} + c \tau^{k}|z|^{k+m-l-1-\alpha}, & l=0~~{\rm or}~~ l=1,3,\cdots,2k-1,\\
   c \tau^{l+2} \left| z \right|^{m+1-\alpha} + c \tau^{k}|z|^{k+m-l-1-\alpha},&l=2,4,\cdots,2k
 \end{array}
 \right.
\end{equation*}
with
$$K(z)=
\left( \delta^{\alpha}_{\tau, k}(e^{-z\tau}) -A\right)^{-1} \delta^{m}_{\tau, k}(e^{-z\tau}) \frac{\gamma_{l}(e^{-z\tau})}{l!} \tau^{l+1} - (z^{\alpha}-A)^{-1} \frac{z^m}{z^{l+1}}. $$
\end{lemma}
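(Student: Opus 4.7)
The plan is to split $K(z)$ into two pieces by adding and subtracting an intermediate term, and then estimate each piece by combining the resolvent estimate with the ingredients already developed in Lemmas \ref{Lemma 2.3}--\ref{Lemma 3.4}. Specifically, I would write
\begin{equation*}
K(z) = I_1(z) + I_2(z),
\end{equation*}
where
\begin{equation*}
I_1(z) = \bigl(\delta^{\alpha}_{\tau,k}(e^{-z\tau}) - A\bigr)^{-1}\!\left[ \delta^{m}_{\tau,k}(e^{-z\tau}) \frac{\gamma_{l}(e^{-z\tau})}{l!} \tau^{l+1} - \frac{z^{m}}{z^{l+1}}\right],
\end{equation*}
\begin{equation*}
I_2(z) = \left[\bigl(\delta^{\alpha}_{\tau,k}(e^{-z\tau}) - A\bigr)^{-1} - (z^{\alpha}-A)^{-1}\right] \frac{z^{m}}{z^{l+1}}.
\end{equation*}

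For $I_1$, Lemma \ref{Lemma 2.3} tells me that $\delta_{\tau,k}(e^{-z\tau}) \in \Sigma_{\pi/2+\varepsilon}$ with $|\delta_{\tau,k}(e^{-z\tau})|\asymp |z|$ on $\Gamma^{\tau}_{\theta,\kappa}$, so by the fractional resolvent estimate \eqref{fractional resolvent estimate} applied at the shifted symbol I obtain $\|(\delta^{\alpha}_{\tau,k}(e^{-z\tau}) - A)^{-1}\| \leq c|\delta_{\tau,k}(e^{-z\tau})|^{-\alpha} \leq c|z|^{-\alpha}$. The bracketed factor is precisely what Lemma \ref{Lemma 3.4} controls, giving $c\tau^{l+1}|z|^{m} + c\tau^{k}|z|^{k+m-l-1}$ in the odd/zero case and $c\tau^{l+2}|z|^{m+1} + c\tau^{k}|z|^{k+m-l-1}$ in the even case. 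Multiplication by $c|z|^{-\alpha}$ yields exactly the leading terms in the claimed bound.

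For $I_2$, the standard resolvent identity gives
\begin{equation*}
\bigl(\delta^{\alpha}_{\tau,k}(e^{-z\tau}) - A\bigr)^{-1} - (z^{\alpha}-A)^{-1} = (z^{\alpha}-A)^{-1}\bigl[z^{\alpha} - \delta^{\alpha}_{\tau,k}(e^{-z\tau})\bigr]\bigl(\delta^{\alpha}_{\tau,k}(e^{-z\tau}) - A\bigr)^{-1}.
\end{equation*}
Combining the two resolvent bounds $\|(z^{\alpha}-A)^{-1}\| \leq c|z|^{-\alpha}$ and $\|(\delta^{\alpha}_{\tau,k}(e^{-z\tau}) - A)^{-1}\| \leq c|z|^{-\alpha}$ with the consistency estimate $|\delta^{\alpha}_{\tau,k}(e^{-z\tau})-z^{\alpha}| \leq c\tau^{k}|z|^{k+\alpha}$ from Lemma \ref{Lemma 2.3}, and then inserting the factor $|z|^{m-l-1}$, I get $\|I_2(z)\| \leq c\tau^{k}|z|^{k+m-l-1-\alpha}$. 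This matches the second term of the asserted bound and is absorbed into it. Applying the triangle inequality to $K(z) = I_1(z) + I_2(z)$ finishes the proof.

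The work is essentially bookkeeping: all nontrivial ingredients (the sectorial location of $\delta_{\tau,k}(e^{-z\tau})$, the $\tau^{k}|z|^{k+\alpha}$ consistency gap, and the estimate for the scalar expression $\delta^{m}_{\tau,k}\,\gamma_l\,\tau^{l+1}/l! - z^{m-l-1}$) are already packaged in the preceding lemmas. The only subtle point worth flagging is the need to invoke the resolvent bound at $\delta^{\alpha}_{\tau,k}(e^{-z\tau})$ rather than at $z^{\alpha}$, which is legitimate precisely because Lemma \ref{Lemma 2.3} places $\delta_{\tau,k}(e^{-z\tau})$ (hence its $\alpha$-power) in a sector where \eqref{fractional resolvent estimate} applies uniformly. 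So I do not expect a genuine obstacle; the care lies in verifying that the two parity cases of Lemma \ref{Lemma 3.4} propagate correctly through the $|z|^{-\alpha}$ factor and combine with the $I_2$ estimate without loss.
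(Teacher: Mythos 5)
Your decomposition $K=I_1+I_2$ is exactly the paper's splitting, and the two estimates (discrete resolvent bound plus Lemma \ref{Lemma 3.4} for $I_1$; the resolvent identity \eqref{identity1} plus Lemma \ref{Lemma 2.3} for $I_2$) coincide with the paper's argument. The proof is correct and essentially identical to the one given in the paper.
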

\begin{proof}
Let
$
K(z) = I + II
$
with
\begin{equation*}
\begin{split}
I  & = \left( \delta^{\alpha}_{\tau, k}(e^{-z\tau}) -A\right)^{-1} \left[ \delta^{m}_{\tau, k}(e^{-z\tau})  \frac{\gamma_{l}(e^{-z\tau})}{l!}  \tau^{l+1} -  \frac{z^m}{z^{l+1}} \right],  \\
II & = \left[ \left( \delta^{\alpha}_{\tau, k}(e^{-z\tau}) -A\right)^{-1} - (z^{\alpha}-A)^{-1} \right]\frac{z^m}{z^{l+1}}.
\end{split}
\end{equation*}
The resolvent estimate \eqref{fractional resolvent estimate},   Lemmas  \ref{Lemma 2.3} and \ref{Lemma 3.4} imply directly
\begin{equation}\label{discrete fractional resolvent estimate}
\| \left(\delta^{\alpha}_{\tau, k}(e^{-z\tau}) -A\right)^{-1} \| \leq c |z|^{-\alpha},
\end{equation}
and
 \begin{equation*}
\left\|I \right\|\leq \left\{ \begin{array}
 {l@{\quad} l}
  c \tau^{l+1} \left| z \right|^{m-\alpha} + c \tau^{k}|z|^{k+m-l-1-\alpha}, & l=0~~{\rm or}~~ l=1,3,\cdots,2k-1,\\
   c \tau^{l+2} \left| z \right|^{m+1-\alpha} + c \tau^{k}|z|^{k+m-l-1-\alpha},&l=2,4,\cdots,2k.
 \end{array}
 \right.
\end{equation*}

According to     Lemma \ref{Lemma 2.3}, \eqref{discrete fractional resolvent estimate} and the identity
\begin{equation}\label{identity1}
\begin{split}
  &\left( \delta^{\alpha}_{\tau, k}(e^{-z\tau}) -A\right)^{-1} - (z^{\alpha}-A)^{-1}\\
 &\quad =\left( z^{\alpha} - \delta^{\alpha}_{\tau, k}(e^{-z\tau}) \right) \left( \delta^{\alpha}_{\tau, k}(e^{-z\tau}) -A\right)^{-1} (z^{\alpha}-A)^{-1},
\end{split}
\end{equation}
we estimate $II$ as following
\begin{equation*}
\|II\|  \leq c \tau^{k} |z|^{k + \alpha}  c |z|^{-\alpha} c |z|^{-\alpha}  |z|^{-l+m-1} \leq c \tau^{k} |z|^{k+m-l-1-\alpha}.
\end{equation*}
Then  the desired result is obtained.
\end{proof}

\begin{lemma}\label{addLemma 3.6}
Let $\delta^{\alpha}_{\tau, k}$  be given in \eqref{2.2}  and $\gamma_{m}(\xi)=\sum^{\infty}_{n=1}n^{m} \xi^{n}$ with $1\leq m \leq k \leq 6$. Then there { exists} a  positive constant $c$ such that
 \begin{equation*}
\left\|\mathcal{K}(z)  \right\|\leq \left\{ \begin{array}
 {l@{\quad} l}
  c \tau^{m+1} \left| z \right|^{m} + c \tau^{k}|z|^{k-1}, & m=1,3,5,\\
   c \tau^{m+2} \left| z \right|^{m+1} + c \tau^{k}|z|^{k-1},&m=2,4,6
 \end{array}
 \right.
\end{equation*}
with
$$\mathcal{K}(z)=
\left(\delta^{\alpha}_{\tau, k}(e^{-z\tau}) -A\right)^{-1} \delta^{m}_{\tau, k}(e^{-z\tau}) \frac{\gamma_{m}(\xi)}{m!} \tau^{m+1} A - (z^{\alpha}-A)^{-1} z^{-1} A. $$
\end{lemma}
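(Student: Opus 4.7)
The plan is to mirror the two-piece split used in Lemma \ref{Lemma 3.11}, with the twist that the extra factor of $A$ on the right must be absorbed into the resolvents rather than left free. Concretely, I would write
\begin{equation*}
\mathcal{K}(z) = \mathcal{I}(z) + \mathcal{II}(z),
\end{equation*}
with
\begin{equation*}
\mathcal{I}(z) = \left(\delta^{\alpha}_{\tau, k}(e^{-z\tau}) -A\right)^{-1} A \left[\delta^{m}_{\tau, k}(e^{-z\tau}) \frac{\gamma_{m}(e^{-z\tau})}{m!} \tau^{m+1} - z^{-1}\right],
\end{equation*}
\begin{equation*}
\mathcal{II}(z) = \left[\left(\delta^{\alpha}_{\tau, k}(e^{-z\tau}) -A\right)^{-1} - (z^{\alpha}-A)^{-1}\right] A z^{-1}.
\end{equation*}
The point of this splitting is that, since the resolvent commutes with $A$, we have the standard identity $(\delta^{\alpha}_{\tau, k}(e^{-z\tau}) -A)^{-1} A = \delta^{\alpha}_{\tau, k}(e^{-z\tau}) (\delta^{\alpha}_{\tau, k}(e^{-z\tau}) -A)^{-1} - I$, and the analogous one for $(z^{\alpha}-A)^{-1} A$. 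Combined with Lemma \ref{Lemma 2.3} and the discrete resolvent bound \eqref{discrete fractional resolvent estimate}, this gives
\begin{equation*}
\bigl\|(\delta^{\alpha}_{\tau, k}(e^{-z\tau}) -A)^{-1} A\bigr\| \leq c,\qquad \bigl\|(z^{\alpha}-A)^{-1} A\bigr\| \leq c,
\end{equation*}
uniformly for $z\in \Gamma^{\tau}_{\theta,\kappa}$, which is the essential tool that lets the operator $A$ be handled like a bounded factor.

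For $\mathcal{I}(z)$, after applying the above uniform bound, what remains is precisely the scalar estimate from Lemma \ref{Lemma 3.4} specialized to $l=m$. The first case ($l=0$ or $l$ odd) yields $c\tau^{m+1}|z|^m + c\tau^k|z|^{k-1}$ for odd $m=1,3,5$, while the second case ($l$ even) yields $c\tau^{m+2}|z|^{m+1}+c\tau^k|z|^{k-1}$ for even $m=2,4,6$. This already produces the claimed bound for $\mathcal{I}$.

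For $\mathcal{II}(z)$, I would substitute the resolvent identity \eqref{identity1}, writing the bracketed difference as $(z^\alpha-\delta^\alpha_{\tau,k}(e^{-z\tau}))(\delta^\alpha_{\tau,k}(e^{-z\tau})-A)^{-1}(z^\alpha-A)^{-1}$, and then estimate factor by factor: Lemma \ref{Lemma 2.3} controls the scalar prefactor by $c\tau^k|z|^{k+\alpha}$, the discrete resolvent by $c|z|^{-\alpha}$, and $(z^\alpha-A)^{-1}Az^{-1}$ by $c|z|^{-1}$. The product is $c\tau^k|z|^{k-1}$, which fits into the $\tau^k|z|^{k-1}$ term of both cases in the statement. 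Summing $\mathcal{I}$ and $\mathcal{II}$ by the triangle inequality yields the result.

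I do not expect any real obstacle beyond recognizing the right splitting; the only non-routine point is noticing that $(\delta^\alpha_{\tau,k}(e^{-z\tau})-A)^{-1}A$ and $(z^\alpha-A)^{-1}A$ are both uniformly bounded in the operator norm, so that Lemma \ref{Lemma 3.4} can be invoked directly with $l=m$ without any $A$-induced loss. All other steps are verbatim analogues of the argument already given in Lemma \ref{Lemma 3.11}.
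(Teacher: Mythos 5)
Your decomposition is correct and is in fact just a regrouping of the paper's own four-term split $J_1+J_2+J_3+J_4$ (your $\mathcal{I}=J_1+J_4$ and $\mathcal{II}=J_2+J_3$, via the identity $(B-A)^{-1}A=-I+B(B-A)^{-1}$, which is also the source of your uniform bounds on $(\delta^{\alpha}_{\tau,k}(e^{-z\tau})-A)^{-1}A$ and $(z^{\alpha}-A)^{-1}A$), and it invokes the same ingredients: Lemma \ref{Lemma 3.4} with $l=m$, Lemma \ref{Lemma 2.3}, the discrete resolvent bound \eqref{discrete fractional resolvent estimate}, and the perturbation identity \eqref{identity1}. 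This is essentially the paper's argument, packaged slightly more compactly.
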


\begin{proof}
Since
$(z^{\alpha}-A)^{-1} z^{-1}  A = - z^{-1}   + (z^{\alpha}-A)^{-1} z^{\alpha} z^{-1}$ and
\begin{equation*}
\begin{split}
& \left(\delta^{\alpha}_{\tau, k}(e^{-z\tau}) -A\right)^{-1} \delta^{m}_{\tau, k}(e^{-z\tau}) \frac{\gamma_{m}(\xi)}{m!} \tau^{m+1} A \\
& =  - \delta^{m}_{\tau, k}(e^{-z\tau})\frac{\gamma_{m}(\xi)}{m!} \tau^{m+1}  + \left( \delta^{\alpha}_{\tau, k}(e^{-z\tau}) -A\right)^{-1} \delta^{\alpha}_{\tau, k}(e^{-z\tau}) \delta^{m}_{\tau, k}(e^{-z\tau}) \frac{\gamma_{m}(\xi)}{m!} \tau^{m+1}.
\end{split}
\end{equation*}
Then we can split $\mathcal{K}(z)$ as
\begin{equation*}
\mathcal{K}(z) = J_{1} + J_{2} + J_{3} + J_{4}
\end{equation*}
with
\begin{equation*}
\begin{split}
J_{1} =& \left( \delta^{\alpha}_{\tau, k}(e^{-z\tau}) -A\right)^{-1} \delta^{\alpha}_{\tau, k}(e^{-z\tau}) \left( \delta^{m}_{\tau, k}(e^{-z\tau})  \frac{\gamma_{m}(e^{-z\tau})}{m!} \tau^{m+1} - z^{-1} \right), \\
J_{2} =& \left( \delta^{\alpha}_{\tau, k}(e^{-z\tau}) -A\right)^{-1} \left( \delta^{\alpha}_{\tau, k}(e^{-z\tau}) -  z^{\alpha} \right) z^{-1}, \\
J_{3} =& \left( \left( \delta^{\alpha}_{\tau, k}(e^{-z\tau}) -A\right)^{-1} - (z^{\alpha}-A)^{-1} \right) z^{\alpha - 1}, \quad
J_{4} = z^{-1} - \delta^{m}_{\tau, k}(e^{-z\tau})\frac{\gamma_{m}(\xi)}{m!} \tau^{m+1}.
\end{split}
\end{equation*}

From  \eqref{discrete fractional resolvent estimate}, \eqref{identity1}  and Lemmas \ref{Lemma 2.3}, \ref{Lemma 3.4}, we  estimate $J_{1}$, $J_{4}$ and $J_2$, $J_3$ as following
\begin{equation*}
\left\|J_1 \right\|\leq c\left\|J_4 \right\|\leq \left\{ \begin{array}
 {l@{\quad} l}
 c  \tau^{m+1} \left| z \right|^{m} + c \tau^{k} \left| z \right|^{k-1}, & m=1,3,5,\\
   c \tau^{m+2} \left| z \right|^{m+1} + c \tau^{k}|z|^{k-1},&m=2,4,6,
 \end{array}
 \right.
\end{equation*}
and
\begin{equation*}
\begin{split}
&\left\| J_{2}  \right\| \leq c |z|^{-\alpha} \tau^{k}|z|^{k+\alpha} |z|^{-1}  \leq   c \tau^{k} \left| z \right|^{k-1},
~~~~\left\| J_{3}  \right\|  \leq   c \tau^{k}|z|^{k-1}.
\end{split}
\end{equation*}
The proof is completed.
\end{proof}

\subsection{Error analysis for general source function $g(t)$}
Let    $G(t)=J^{m}g(t)$, $1\leq m \leq k\leq 6$ be defined by \eqref{smoothing1}.
The Taylor expansion of general  source  function with the remainder term in integral form is given by
\begin{equation}\label{gs3.3}
\begin{split}
\frac{t^{m-1}}{(m-1)!} \ast g(t)=G(t) & =  \sum_{l=0}^{k+m-1}  \frac{t^l}{l!}  G^{(l)}(0)  + \frac{t^{k+m-1}}{(k+m-1)!}  \ast  G^{(k+m)}(t)\\
& =  \sum_{l=0}^{k+m-1}  \frac{t^l}{l!} g^{(l-m)}(0)  + \frac{t^{k+m-1}}{(k+m-1)!}  \ast  g^{(k)}(t)
\end{split}
\end{equation}
with $g^{(-i)}(0)=J^{i}g(0)=0$, $i \geq 1$.
Then  we obtain   the following results.
\begin{lemma}\label{lemma3.9}
Let $V(t_{n})$ and $V^{n}$ be the solutions of \eqref{rrfee} and \eqref{2.3}, respectively.
Let $v=0$ and $G(t):=\frac{t^{l}}{l!} g^{(l-m)}(0)$ with $l=0,1, \ldots, k+m-1$, $1\leq m \leq k\leq 6$.
Then the following error estimate holds for any $t_n>0$
 \begin{equation*}
\left\| V(t_{n}) \!-\! V^{n}  \right\|_{ {L^2(\Omega)}}\!\leq\! \left\{ \begin{split}
 \!\!\left( c\tau^{l+1}t_{n}^{\alpha-m-1}\! +\! c\tau^{k}t_{n}^{\alpha+l-k-m} \right)\! \left\| g^{(l-m)}(0)\right\|_{ {L^2(\Omega)}}, & l=0,1,3,5,\cdots\\ \\
 \!\!  \left( c\tau^{l+2}t_{n}^{\alpha-m-2}\! +\! c\tau^{k}t_{n}^{\alpha+l-k-m} \right)\! \left\| g^{(l-m)}(0)\right\|_{ {L^2(\Omega)}},&l=2,4,6,\cdots.
 \end{split}
 \right.
\end{equation*}
\end{lemma}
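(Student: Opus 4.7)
The plan is to compare the continuous Laplace representation in \eqref{LT} with the discrete one from Lemma \ref{Lemma2.1} via a contour argument, since all the analytic heavy lifting is already packaged into Lemmas \ref{Lemma 2.3}--\ref{Lemma 3.11}. First I would simplify both representations under the hypotheses $v=0$ and $G(t)=\frac{t^{l}}{l!}g^{(l-m)}(0)$. A direct Laplace transform gives $\widehat{G}(z)=z^{-(l+1)}g^{(l-m)}(0)$, while the sum $G^{n}=\frac{(n\tau)^{l}}{l!}g^{(l-m)}(0)$ together with the identity $\gamma_{l}(\xi)=\sum_{n=1}^{\infty}n^{l}\xi^{n}$ gives $\widetilde{G}(e^{-z\tau})=\frac{\tau^{l}}{l!}\gamma_{l}(e^{-z\tau})g^{(l-m)}(0)$. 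After this substitution, the two integrands differ by precisely the kernel $K(z)$ of Lemma \ref{Lemma 3.11}, applied to the constant $g^{(l-m)}(0)$.

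Next, I would split the error as $V(t_n)-V^{n}=I_{\rm tail}+I_{\rm main}$, where $I_{\rm tail}$ is the continuous integrand on $\Gamma_{\theta,\kappa}\setminus\Gamma^{\tau}_{\theta,\kappa}$ and $I_{\rm main}$ is the integral of $K(z)g^{(l-m)}(0)$ over $\Gamma^{\tau}_{\theta,\kappa}$. Choosing $\kappa=1/t_n$ and $\theta\in(\pi/2,\pi)$ close to $\pi/2$ (as permitted by Lemma \ref{Lemma 2.3}), the tail is confined to the two rays with $|z|\ge\pi/(\tau\sin\theta)$. Combining \eqref{fractional resolvent estimate} with the elementary bound $e^{-x}\le c_p x^{-p}$ applied with $p=k+m-l-\alpha$, a short computation yields
$$\|I_{\rm tail}\|_{L^{2}(\Omega)}\le c\,t_n^{-p}\int_{\pi/(\tau\sin\theta)}^{\infty}r^{m-l-1-\alpha-p}\,dr\le c\tau^{k}t_n^{\alpha+l-k-m}\|g^{(l-m)}(0)\|_{L^{2}(\Omega)},$$
which matches the $\tau^{k}$-term of the target estimate.

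For the main integral I would apply Lemma \ref{Lemma 3.11} termwise. On the arc $|z|=1/t_n$ the factor $|e^{zt_n}|$ is bounded and the arc length is of order $1/t_n$; on the rays, the substitution $s=rt_n$ converts $\int e^{-rt_n|\cos\theta|}r^{\beta}\,dr$ into a bounded integral in $s$ times $t_n^{-\beta-1}$. In this way the piece $c\tau^{l+1}|z|^{m-\alpha}$ of $\|K(z)\|$ produces $c\tau^{l+1}t_n^{\alpha-m-1}$ for $l=0$ or $l$ odd, the piece $c\tau^{l+2}|z|^{m+1-\alpha}$ produces $c\tau^{l+2}t_n^{\alpha-m-2}$ for $l$ even, and the common piece $c\tau^{k}|z|^{k+m-l-1-\alpha}$ produces $c\tau^{k}t_n^{\alpha+l-k-m}$, reproducing the tail bound. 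Adding $I_{\rm tail}$ and $I_{\rm main}$ then gives the two-branch estimate claimed in the lemma.

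The only genuine nuisance is the parity-dependent case split inherited from Lemma \ref{Lemma nn3.3}, which forces the two branches in the final bound and must be propagated faithfully through every contour estimate; apart from this bookkeeping, every step reduces to a standard resolvent-based contour estimate and no new analytic idea is required at this stage.
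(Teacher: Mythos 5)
Your proposal is correct and follows essentially the same route as the paper: the same reduction of both solution representations to the kernel $K(z)$ of Lemma \ref{Lemma 3.11}, the same splitting into the difference integral over $\Gamma^{\tau}_{\theta,\kappa}$ plus the continuous tail over $\Gamma_{\theta,\kappa}\setminus\Gamma^{\tau}_{\theta,\kappa}$, and the same arc-plus-rays contour estimates with $\kappa\sim 1/t_n$. The only cosmetic difference is that you absorb the tail's exponential via $e^{-x}\le c_p x^{-p}$, whereas the paper inserts the factor $1\le(\tfrac{\sin\theta}{\pi})^{k}\tau^{k}r^{k}$; both yield the identical bound $c\tau^{k}t_n^{\alpha+l-k-m}$.
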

\begin{proof}
From    \eqref{LT} and Lemma  \ref{Lemma2.1}, it leads to
\begin{equation*}
V(t_{n})=\frac{1}{2\pi i}\int_{\Gamma_{\theta,\kappa}}{e^{zt_{n}}(z^{\alpha}-A)^{-1} \frac{1}{z^{l+1-m}} g^{(l-m)}(0)}dz,
\end{equation*}
and
\begin{equation*}
V^{n}=\frac{1}{2\pi i}\int_{\Gamma^{\tau}_{\theta,\kappa}} e^{zt_{n}} \left( \delta^{\alpha}_{\tau, k}(e^{-z\tau}) -A\right)^{-1} \delta^{m}_{\tau, k}(e^{-z\tau}) \frac{\gamma_{l}(e^{-z\tau})}{l!} \tau^{l+1} g^{(l-m)}(0) dz
\end{equation*}
with $\gamma_{l}(\xi)=\sum^{\infty}_{n=1}n^{l} \xi^{n}$.
Then we have
\begin{equation*}
V(t_{n})-V^{n}=J_1 + J_2
\end{equation*}
with
\begin{equation*}
\begin{split}
J_1
& = \frac{1}{2\pi i}\int_{\Gamma^{\tau}_{\theta,\kappa}} e^{zt_{n}} K(z)  g^{(l-m)}(0) dz,~~K(z){\rm ~in~ Lemma~\ref{Lemma 3.11}, }
\end{split}
\end{equation*}
and
\begin{equation*}
\begin{split}
J_2 =\frac{1}{2\pi i}\int_{\Gamma_{\theta,\kappa}\setminus\Gamma^{\tau}_{\theta,\kappa}}{e^{zt_{n}}\left(z^{\alpha} - A\right)^{-1} \frac{1}{z^{l+1-m}} g^{(l-m)}(0)}dz.
\end{split}
\end{equation*}
According to the triangle inequality, \eqref{fractional resolvent estimate} and Lemma \ref{Lemma 3.11}, { it yields}
\begin{equation*}
\begin{split}
  \| J_1 \|_{ {L^2(\Omega)}}
  &  \leq c  \int^{\frac{\pi}{\tau\sin\theta}}_{\kappa} e^{rt_{n}\cos\theta} \left(\tau^{l+1}  r^{m-\alpha} +  \tau^{k} r^{k+m-1-l-\alpha} \right) dr \left\|g^{(l-m)}(0)\right\|_{ {L^2(\Omega)}} \\
  & \quad + c \int^{\theta}_{-\theta}e^{\kappa t_{n} \cos\psi} \left(\tau^{l+1}  \kappa^{m+1-\alpha} +  \tau^{k} \kappa^{k+m-l-\alpha} \right) d\psi  \left\|g^{(l-m)}(0)\right\|_{ {L^2(\Omega)}} \\
  &  \leq  \left( c\tau^{l+1}t_{n}^{\alpha-m-1} + c\tau^{k}t_{n}^{\alpha+l-k-m} \right) \left\| g^{(l-m)}(0)\right\|_{ {L^2(\Omega)}},~ l=0~{\rm or}~ l=1,3,\cdots,
\end{split}
\end{equation*}
and
\begin{equation*}
  \| J_1 \|_{ {L^2(\Omega)}}  \leq  \left( c\tau^{l+2}t_{n}^{\alpha-m-2} + c\tau^{k}t_{n}^{\alpha+l-k-m} \right) \left\| g^{(l-m)}(0)\right\|_{ {L^2(\Omega)}},~~l=2,4,\cdots,
\end{equation*}
where  we use
\begin{equation}\label{ad3.3.09}
\begin{split}
& \int^{\frac{\pi}{\tau\sin\theta}}_{\kappa} e^{rt_{n}\cos\theta} r^{k+m-1-l-\alpha}dr
\leq c t_n^{\alpha+l-k-m},\\
& \int^{\theta}_{-\theta}e^{\kappa t_{n} \cos\psi} \kappa^{k+m-l-\alpha} d\psi
\leq c t_n^{\alpha+l-k-m}.
\end{split}
\end{equation}
From   \eqref{fractional resolvent estimate}, one has
\begin{equation*}
\begin{split}
\|J_2 \|_{ {L^2(\Omega)}}
&\leq c  \int^{\infty}_{\frac{\pi}{\tau\sin\theta}} e^{rt_{n}\cos\theta}r^{m-l-1-\alpha}dr \left\| g^{(l-m)}(0) \right\|_{ {L^2(\Omega)}}\\
&\leq c\tau^{k}  \int^{\infty}_{\frac{\pi}{\tau\sin\theta}} e^{rt_{n}\cos\theta}r^{k+m-1-l-\alpha}dr  \left\| g^{(l-m)}(0) \right\|_{ {L^2(\Omega)}}\\
&\leq  c\tau^{k}t_{n}^{\alpha+l-k-m} \left\| g^{(l-m)}(0) \right\|_{ {L^2(\Omega)}}.
\end{split}
\end{equation*}
Here we { use} $1\leq (\frac{\sin \theta}{\pi})^{k} \tau^{k} r^{k}$ with $r\geq \frac{\pi}{\tau \sin \theta}$.
The proof is completed.
\end{proof}

\begin{lemma}\label{lemma3.10}
Let $V(t_{n})$ and $V^{n}$ be the solutions of \eqref{rrfee} and \eqref{2.3}, respectively.
Let $v=0$,  $G(t):=\frac{t^{k+m-1}}{(k+m-1)!}  \ast  g^{(k)}(t)$, $1\leq m \leq k\leq 6$ and  $\int_{0}^{t}  (t-s)^{\alpha-1} \| g^{(k)}(s) \|_{ {L^2(\Omega)}} ds <\infty$. Then the following error estimate holds for any $t_n>0$
\begin{equation*}
\left\|V(t_{n})-V^{n}\right\|_{ {L^2(\Omega)}}\leq c\tau^{k} \int_{0}^{t_{n}} (t_n-s)^{\alpha-1} \left\| g^{(k)}(s) \right\|_{ {L^2(\Omega)}}ds.
\end{equation*}
\end{lemma}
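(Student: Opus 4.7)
The plan is to adapt the contour-integral strategy used for Lemma~\ref{lemma3.9}, replacing the pointwise coefficient $g^{(l-m)}(0)$ by the density $g^{(k)}(s)$ integrated against a Green's-function kernel.

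Since $\widehat{G}(z) = z^{-(k+m)}\widehat{g^{(k)}}(z)$, the Laplace convolution theorem combined with \eqref{LT} (taking $v = 0$) rewrites the continuous solution as
\[
V(t_n) = \int_0^{t_n}\mathcal{E}(t_n-s)\,g^{(k)}(s)\,ds, \qquad \mathcal{E}(t) := \frac{1}{2\pi i}\int_{\Gamma_{\theta,\kappa}}e^{zt}(z^\alpha-A)^{-1}z^{-k}\,dz.
\]
For the discrete side I would substitute $G^j = \int_0^{t_j}\tfrac{(t_j-s)^{k+m-1}}{(k+m-1)!}g^{(k)}(s)\,ds$ into the representation of Lemma~\ref{Lemma2.1} and swap the sum in $j$ with the integral in $s$, producing $V^n = \int_0^{t_n}\mathcal{E}_\tau^n(s)\,g^{(k)}(s)\,ds$, where $\mathcal{E}_\tau^n(s)$ is a contour integral over $\Gamma^\tau_{\theta,\kappa}$ involving the discrete symbols and the partial discrete Laplace sum $\tau\sum_{j\ge\lceil s/\tau\rceil}\tfrac{(t_j-s)^{k+m-1}}{(k+m-1)!}e^{-zt_j}$.

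The lemma then reduces to the pointwise kernel bound
\[
\|\mathcal{E}(t_n-s)-\mathcal{E}_\tau^n(s)\| \le c\,\tau^k(t_n-s)^{\alpha-1}.
\]
Splitting $\Gamma_{\theta,\kappa} = \Gamma^\tau_{\theta,\kappa}\cup(\Gamma_{\theta,\kappa}\setminus\Gamma^\tau_{\theta,\kappa})$, the tail is controlled as in Lemma~\ref{lemma3.9} via \eqref{fractional resolvent estimate}. On $\Gamma^\tau_{\theta,\kappa}$, I would decompose the integrand difference via the resolvent identity \eqref{identity1} into three pieces: the BDF$k$ resolvent error $\delta^\alpha_{\tau,k}(e^{-z\tau}) - z^\alpha = O(\tau^k|z|^{k+\alpha})$ from Lemma~\ref{Lemma 2.3}; the symbol error $\delta^m_{\tau,k}(e^{-z\tau}) - z^m = O(\tau^k|z|^{k+m})$; and the quadrature defect between the discrete Laplace sum and its target $z^{-(k+m)}$. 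Because the kernel $u^{k+m-1}/(k+m-1)!$ vanishes to order $k+m$ at $u=0$, an Euler--Maclaurin expansion yields a $\tau^{k+m}$ gain for the third piece. Combining the three defects with \eqref{discrete fractional resolvent estimate} produces an integrand of size $c\tau^k|z|^{-1-\alpha}$, and integration along $\Gamma^\tau_{\theta,\kappa}$ with $\kappa\sim(t_n-s)^{-1}$ yields the $(t_n-s)^{\alpha-1}$ factor as in \eqref{ad3.3.09}.

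The main obstacle I expect is the third piece: the offset between $s$ and the first grid point $t_{\lceil s/\tau\rceil}$ produces boundary contributions in Euler--Maclaurin that must be shown absorbable into the $\tau^k$ gain uniformly in $s\in[0,t_n]$. Once this uniform quadrature estimate is established, the remaining bookkeeping follows the pattern of the proof of Lemma~\ref{lemma3.9}.
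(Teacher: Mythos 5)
Your reduction is the same as the paper's: you write $V(t_n)=(\mathcal{E}\ast g^{(k)})(t_n)$ and $V^n=\int_0^{t_n}\mathcal{E}^n_\tau(s)g^{(k)}(s)\,ds$, where $\mathcal{E}$ and $\mathcal{E}^n_\tau(s)$ are exactly the paper's $\mathscr{E}\ast\frac{t^{k+m-1}}{(k+m-1)!}$ and $\bigl(\mathscr{E}_\tau\ast\frac{t^{k+m-1}}{(k+m-1)!}\bigr)(t_n-s)$, and everything hinges on the uniform off-grid kernel bound \eqref{3.0003}. Up to that point the proposal matches the paper. The divergence is in how you propose to prove \eqref{3.0003}, and there your plan has a genuine gap beyond the one you flag.

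The problem is the third piece of your contour decomposition. You want to treat $\tau\sum_{j}\frac{(t_j-s)_+^{k+m-1}}{(k+m-1)!}e^{-z(t_j-s)}$ versus $z^{-(k+m)}$ as a quadrature defect controlled by Euler--Maclaurin. But on the rays of $\Gamma^{\tau}_{\theta,\kappa}$ one has $\Re z<0$, so $z^{-(k+m)}$ is only the analytic continuation of the Laplace integral $\int_0^\infty\frac{u^{k+m-1}}{(k+m-1)!}e^{-zu}\,du$, which diverges there; the infinite discrete Laplace sum diverges as well, and the finite sum (truncated at $j=n$, which is what actually appears after interchanging sum and integral) differs from the continuation by a truncation error that is not pointwise small. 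So ``sum minus integral'' is not an Euler--Maclaurin remainder on most of the contour. Even on the portion where the comparison makes sense, the weight $e^{-zu}$ grows like $e^{|{\Re z}|u}$ over $u\in[0,t_n-s]$, so the remainder $\int|\phi^{(k+m)}|$ is only usable if it is kept coupled to the prefactor $e^{zt_n}$; your sketch does not do this. The grid-offset boundary terms you identify as the main obstacle are real but secondary to these two issues. The paper avoids all of this by never leaving the grid in the quadrature analysis: it Taylor-expands the smooth function $t\mapsto\bigl(\mathscr{E}\ast\frac{t^{k+m-1}}{(k+m-1)!}\bigr)(t)$ (and its discrete counterpart) about the nearest grid point $t_n$, reuses the already-proved grid-point estimates \eqref{aadd3.8} from Lemma~\ref{lemma3.9} for the Taylor coefficients $\bigl((\mathscr{E}_\tau-\mathscr{E})\ast t^{k+m-l-1}\bigr)(t_n)$, and bounds the two Taylor remainders directly via $\|\mathscr{E}(t)\|\le ct^{\alpha-m-1}$ and $\|\mathscr{E}^n_\tau\|\le ct_n^{\alpha-m}$ from \eqref{3.0002}. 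I would recommend replacing your Euler--Maclaurin step by this Taylor-expansion argument (or supplying the missing uniform contour estimates in full, which is substantially harder than the sketch suggests).
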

\begin{proof}
From the continuous  solution representation in  \eqref{LT}, we have
\begin{equation}\label{nas3.6}
\begin{split}
V(t_{n})
&=(\mathscr{E}(t)\ast G(t))(t_{n})
=\left(\left(\mathscr{E}(t)\ast \frac{t^{k+m-1}}{(k+m-1)!}     \right) \ast  g^{(k)}(t) \right)(t_{n})
\end{split}
\end{equation}
with
\begin{equation}\label{nas3.007}
  \mathscr{E}(t)= \frac{1}{2\pi i} \int_{\Gamma_{\theta,\kappa}} e^{zt}(z^{\alpha} - A)^{-1} z^{m} dz.
\end{equation}

Let  $\sum^{\infty}_{n=0}\mathscr{E}^{n}_{\tau}\xi^{n}=\widetilde{\mathscr{E_{\tau}}}(\xi):=\left( \delta^{\alpha}_{\tau, k}(\xi) -A\right)^{-1} \delta^{m}_{\tau, k}(\xi)$.
Then using   \eqref{ads2.17}, it yields
\begin{equation*}
\begin{split}
\widetilde{V}(\xi)
&=\left( \delta^{\alpha}_{\tau, k}(\xi) -A\right)^{-1} \delta^{m}_{\tau, k}(\xi) \widetilde{G}(\xi) = \widetilde{\mathscr{E_{\tau}}}(\xi)\widetilde{G}(\xi)
=\sum^{\infty}_{n=0}\mathscr{E}^{n}_{\tau}\xi^{n}\sum^{\infty}_{j=0}G^j\xi^{j}\\
&=\sum^{\infty}_{n=0}\sum^{\infty}_{j=0}\mathscr{E}^{n}_{\tau} G^j \xi^{n+j}=\sum^{\infty}_{j=0}\sum^{\infty}_{n=j}\mathscr{E}^{n-j}_{\tau} G^j \xi^{n}
=\sum^{\infty}_{n=0}\sum^{n}_{j=0}\mathscr{E}^{n-j}_{\tau} G^j \xi^{n}=\sum^{\infty}_{n=0}V^n\xi^{n}
\end{split}
\end{equation*}
with
\begin{equation*}
V^{n}=\sum^{n}_{j=0}\mathscr{E}^{n-j}_{\tau} G^j:=\sum^{n}_{j=0}\mathscr{E}^{n-j}_{\tau} G(t_{j}).
\end{equation*}

According to  the Cauchy's integral formula and the change of variables $\xi=e^{-z\tau}$, we get the representation of the $\mathscr{E}^{n}_{\tau}$ as following 
 \begin{equation*}
\mathscr{E}^{n}_{\tau}=\frac{1}{2\pi i}\int_{|\xi|=\rho}{\xi^{-n-1}\widetilde{\mathscr{E_{\tau}}}(\xi)}d\xi
=\frac{\tau}{2\pi i}\int_{\Gamma^{\tau}_{\theta,\kappa}} {e^{zt_n}\left( \delta^{\alpha}_{\tau, k}(e^{-z\tau}) -A\right)^{-1} \delta^{m}_{\tau, k}(e^{-z\tau}) }dz.
\end{equation*}

From  \eqref{discrete fractional resolvent estimate}, Lemma \ref{Lemma 2.3} and $\tau t^{-1}_{n} = \frac{1}{n}\leq 1$, it implies
\begin{equation}\label{3.0002}
\|\mathscr{E}^{n}_{\tau}\| \leq c \tau \left( \int^{\frac{\pi}{\tau\sin\theta}}_{\kappa} e^{rt_{n}\cos\theta} r^{m-\alpha}dr +\int^{\theta}_{-\theta}e^{\kappa t_{n}\cos\psi} \kappa^{m+1-\alpha}  d\psi\right)
\leq c t_{n}^{\alpha-m}.
\end{equation}
Let $ \mathscr{E}_{\tau}(t)=\sum^{\infty}_{n=0}\mathscr{E}^{n}_{\tau}\delta_{t_{n}}(t)$, with $\delta_{t_{n}}$ being the Dirac delta function at $t_{n}$.
Then
\begin{equation}\label{nas3.8}
\begin{split}
(\mathscr{E}_{\tau}(t)\ast G(t))(t_{n})
& = \left(\sum^{\infty}_{j=0}\mathscr{E}^{j}_{\tau}\delta_{t_{j}}(t) \ast G(t) \right)(t_{n})\\
& = \sum^{n}_{j=0}\mathscr{E}^{j}_{\tau} G(t_{n}-t_{j})
  = \sum^{n}_{j=0}\mathscr{E}^{n-j}_{\tau} G(t_{j})=V^{n}.
\end{split}
\end{equation}
Moreover, using the above equation, there { exists}
\begin{equation*}
\begin{split}
  \widetilde{(\mathscr{E}_{\tau}\ast t^{l})}(\xi)
& = \sum^{\infty}_{n=0} \sum^{n}_{j=0}\mathscr{E}^{n-j}_{\tau}t^{l}_{j}\xi^{n}  =\sum^{\infty}_{j=0} \sum^{\infty}_{n=j}\mathscr{E}^{n-j}_{\tau}t^{l}_{j}\xi^{n}
  =\sum^{\infty}_{j=0} \sum^{\infty}_{n=0}\mathscr{E}^{n}_{\tau}t^{l}_{j}\xi^{n+j}\\
& =\sum^{\infty}_{n=0}\mathscr{E}^{n}_{\tau}\xi^{n}\sum^{\infty}_{j=0}t^{l}_{j}\xi^{j}  =\widetilde{\mathscr{E_{\tau}}}(\xi) \tau^{l} \sum^{\infty}_{j=0}j^{l}\xi^{j}
  =\widetilde{\mathscr{E}_{\tau}}(\xi) \tau^{l} \gamma_{l}(\xi).
\end{split}
\end{equation*}

Combining  \eqref{nas3.6}, \eqref{nas3.8} and Lemma \ref{lemma3.9}, we have
 \begin{equation}\label{aadd3.8}
\left\| \left((\mathscr{E}_{\tau}-\mathscr{E}) \ast \frac{t^{l}}{l!} \right)(t_n)  \right\|\leq \left\{ \begin{array}
 {l@{\quad} l}
 c\tau^{l+1}t_{n}^{\alpha-m-1} + c\tau^{k}t_{n}^{\alpha+l-k-m}, & l=0,1,3,\cdots\\ \\
 c\tau^{l+2}t_{n}^{\alpha-m-2} + c\tau^{k}t_{n}^{\alpha+l-k-m},&l=2,4,6,\cdots
 \end{array}
 \right.
\end{equation}
with $l\leq k+m-1.$

Next, we prove the following inequality \eqref{3.0003}  for $t>0$
\begin{equation}\label{3.0003}
\left\|\left((\mathscr{E}_{\tau}-\mathscr{E}) \ast \frac{t^{k+m-1}}{(k+m-1)!} \right)(t)\right\| \leq c\tau^{k}t^{\alpha-1} \quad \forall t\in (t_{n-1},t_{n}).
\end{equation}
Applying the  Taylor series expansion of $\mathscr{E}(t)$ at $t=t_{n}$, we get
\begin{equation*}
\begin{split}
 \left( \mathscr{E} \ast \frac{t^{k+m-1}}{(k+m-1)!} \right)(t)
 &= \sum_{l=0}^{k+m-1}   \frac{(t-t_{n})^{l}}{l!} \left( \mathscr{E} \ast \frac{t^{k+m-l-1}}{(k+m-l-1)!} \right)(t_{n})\\
 &\quad +\frac{1}{(k+m-1)!} \int^{t}_{t_{n}}(t-s)^{k+m-1} \mathscr{E}(s)ds,
\end{split}
\end{equation*}
which  also holds  for the convolution $  \left( \mathscr{E}_{\tau} \ast t^{k+m-1} \right)(t) $.
From \eqref{aadd3.8}, it leads to
 \begin{equation*}
 \begin{split}
\left\|(t-t_{n})^{l} \left((\mathscr{E}_{\tau}-\mathscr{E}) \ast t^{k+m-l-1} \right)(t_n)  \right\|
&\leq c\tau^l \left( \tau^{k+m-l}t_{n}^{\alpha-m-1} + \tau^{k}t_{n}^{\alpha-l-1}  \right)\\
&\leq  c \tau^{k} t^{\alpha-1}~~\forall t\in (t_{n-1},t_{n}).
 \end{split}
    \end{equation*}

According to  \eqref{nas3.007}, \eqref{fractional resolvent estimate} and \eqref{ad3.3.09}, one has
\begin{equation*}
\begin{split}
\| \mathscr{E}(t) \|
\leq c \left( \int^{\infty}_{\kappa}e^{rt\cos\theta}r^{m-\alpha}dr + \int^{\theta}_{-\theta}e^{\kappa t \cos\psi}\kappa^{m+1-\alpha}d\psi \right)
\leq c t^{\alpha-m-1}.
\end{split}
\end{equation*}
Moreover, we get
\begin{equation*}
\left\| \int^{t}_{t_{n}}(t-s)^{k+m-1} \mathscr{E}(s)ds \right\|
\leq c \int^{t_{n}}_{t}(s-t)^{k+m-1} s^{\alpha-m-1}ds
\leq  c \tau^{k} t^{\alpha-1}.
\end{equation*}
By  the definition of $ \mathscr{E}_{\tau}(t)=\sum^{\infty}_{n=0}\mathscr{E}^{n}_{\tau}\delta_{t_{n}}(t)$ in \eqref{nas3.8} and \eqref{3.0002}, we deduce
\begin{equation*}
\begin{split}
\left\| \int^{t}_{t_{n}}(t-s)^{k+m-1} \mathscr{E}_{\tau}(s)ds \right\|
& \leq  (t_n-t)^{k+m-1}  \| \mathscr{E}^{n}_{\tau} \|  \leq c \tau^{k+m} t_{n}^{\alpha-m-1} \\
& \leq c \tau^{k} t_{n}^{\alpha-1} \leq c \tau^{k} t^{\alpha-1} \quad  \forall \ t \in   (t_{n-1},t_{n}).
\end{split}
\end{equation*}
Using  the above inequalities, it yields  \eqref{3.0003}.
The proof is completed.
\end{proof}

{ For simplicity, we take}
 \begin{equation}\label{initialesti}
\left\|J_v  \right\|_{ {L^2(\Omega)}}= \left\{ \begin{split}
 & c \tau^{m+1} t_{n}^{-m-1} \left\| v \right\|_{ {L^2(\Omega)}} + c\tau^{k} t_{n}^{-k} \left\| v \right\|_{ {L^2(\Omega)}}, ~ m=1,3,5,\\
&  c \tau^{m+2} t_{n}^{-m-2} \left\| v \right\|_{ {L^2(\Omega)}} + c\tau^{k} t_{n}^{-k} \left\| v \right\|_{ {L^2(\Omega)}},~m=2,4,6;
 \end{split}
 \right.
\end{equation}
and
 \begin{equation*}
\left\|J_g \right\|_{ {L^2(\Omega)}}= \left\{ \begin{split}
& c \sum\limits_{l=0}^{k-1}\left( \tau^{l+m+1}t_{n}^{\alpha-m-1} + \tau^{k}t_{n}^{\alpha+l-k} \right) \left\| g^{(l)}(0)\right\|_{ {L^2(\Omega)}},  ~l+m=1,3,5,\cdots\\
&  c \sum\limits_{l=0}^{k-1} \left( \tau^{l+m+2}t_{n}^{\alpha-m-2} + \tau^{k}t_{n}^{\alpha+l-k} \right) \left\| g^{(l)}(0)\right\|_{ {L^2(\Omega)}},~l+m=2,4,6,\cdots.
 \end{split}
 \right.
\end{equation*}
{ Then we have the following result. }
\begin{theorem}\label{addtheorema3.1}
Let $V(t_{n})$ and $V^{n}$ be the solutions of \eqref{rrfee} and \eqref{2.3}, respectively. Let $v\in L^{2}(\Omega)$, $g\in C^{k-1}([0,T]; L^{2}(\Omega))$
and $\int_{0}^{t}  (t-s)^{\alpha-1} \left\| g^{(k)}(s) \right\|_{ {L^2(\Omega)}} ds <\infty$.  Then the following error estimate holds for any $t_n>0$
\begin{equation*}
\begin{split}
\left\|V^{n}-V(t_{n})\right\|_{ {L^2(\Omega)}}
&\leq ||J_v||_{ {L^2(\Omega)}}+||J_g||_{ {L^2(\Omega)}}+ c\tau^{k}  \int_{0}^{t_{n}}  (t_n-s)^{\alpha-1} \left\| g^{(k)}(s) \right\|_{ {L^2(\Omega)}}  ds.
\end{split}
\end{equation*}
\end{theorem}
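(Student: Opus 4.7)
\textbf{Proof plan for Theorem \ref{addtheorema3.1}.} The strategy is to exploit the linearity of the continuous and discrete problems in the data $(v,g)$, split the source via the Taylor expansion \eqref{gs3.3}, and then assemble the three separate error contributions from the initial data term, each polynomial component, and the integral remainder, bounding each piece by one of the lemmas proved above.

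First, using the continuous representation \eqref{LT} and the discrete representation from Lemma \ref{Lemma2.1}, and noting that both are linear in $v$ and in $G$, I would write
\begin{equation*}
V(t_n) - V^n = \mathcal{E}_v(t_n) + \sum_{l=0}^{k+m-1} \mathcal{E}_l(t_n) + \mathcal{E}_R(t_n),
\end{equation*}
where $\mathcal{E}_v$ collects the terms corresponding to $\frac{t^m}{m!}Av$, where $\mathcal{E}_l$ corresponds to the polynomial piece $\frac{t^l}{l!}g^{(l-m)}(0)$ in \eqref{gs3.3}, and where $\mathcal{E}_R$ corresponds to the convolution remainder $\frac{t^{k+m-1}}{(k+m-1)!}\ast g^{(k)}(t)$. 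Because $g^{(-i)}(0)=J^ig(0)=0$ for $i\geq 1$, the terms $\mathcal{E}_l$ with $l<m$ vanish identically, so the sum effectively runs over $l=m,m+1,\ldots,k+m-1$, which after the reindexing $l'=l-m$ becomes $l'=0,1,\ldots,k-1$.

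Next, for $\mathcal{E}_v$ I would insert the inverse Laplace contour representation and use Lemma \ref{addLemma 3.6} together with the resolvent bound \eqref{fractional resolvent estimate}, deforming the integration along $\Gamma_{\theta,\kappa}$ and estimating the truncation to $\Gamma^\tau_{\theta,\kappa}$ exactly as in the proof of Lemma \ref{lemma3.9}; the two types of bounds in Lemma \ref{addLemma 3.6} (for $m$ odd vs. even) together with the contour integrals of the form $\int_\kappa^{\pi/(\tau\sin\theta)} e^{rt_n\cos\theta}r^s\,dr \leq c\,t_n^{-s-1}$ (and the analogous estimates in \eqref{ad3.3.09}) yield precisely the expression \eqref{initialesti} for $\|J_v\|_{L^2(\Omega)}$. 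For each $\mathcal{E}_l$, Lemma \ref{lemma3.9} directly provides the bound; after the reindex $l'=l-m$ the parity ``$l$ odd or $l=0$'' in the lemma matches the parity of $l'+m$ in the statement of the theorem (the excluded term $l'=-m$ corresponds to $l=0$ and is killed by $g^{(-m)}(0)=0$), producing exactly $\|J_g\|_{L^2(\Omega)}$ after summing over $l'=0,\ldots,k-1$. Finally, Lemma \ref{lemma3.10} gives $\|\mathcal{E}_R(t_n)\|_{L^2(\Omega)} \leq c\tau^k\int_0^{t_n}(t_n-s)^{\alpha-1}\|g^{(k)}(s)\|_{L^2(\Omega)}\,ds$ under the stated integrability hypothesis.

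The main obstacle I would expect is not the lemma-wise bounds themselves, which are direct, but two bookkeeping points: (i) justifying that $\mathcal{E}_v$ really reduces to the situation covered by Lemma \ref{addLemma 3.6} even when $v\in L^2(\Omega)$ only, since the continuous and discrete terms individually contain the unbounded operator $A$ but their combination is controlled in the operator norm by $\mathcal{K}(z)$ applied to $v$ (the $A$ cancels against the resolvents), so nothing beyond $\|v\|_{L^2(\Omega)}$ is needed; and (ii) keeping track of the parity constraint so that the grouping ``$l+m=1,3,5,\ldots$'' versus ``$l+m=2,4,6,\ldots$'' in $\|J_g\|_{L^2(\Omega)}$ matches correctly with the two cases of Lemma \ref{lemma3.9} after reindexing. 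Once these points are handled, the triangle inequality applied to the decomposition above yields the claimed bound $\|V^n-V(t_n)\|_{L^2(\Omega)} \leq \|J_v\|_{L^2(\Omega)} + \|J_g\|_{L^2(\Omega)} + c\tau^k\int_0^{t_n}(t_n-s)^{\alpha-1}\|g^{(k)}(s)\|_{L^2(\Omega)}\,ds$.
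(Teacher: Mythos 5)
Your proposal is correct and follows essentially the same route as the paper: split the error into the initial-data contribution (bounded via Lemma \ref{addLemma 3.6} on the truncated contour plus the tail estimate \eqref{add3.16}, yielding $\|J_v\|_{L^2(\Omega)}$), the polynomial pieces of the Taylor expansion \eqref{gs3.3} handled by Lemma \ref{lemma3.9} (yielding $\|J_g\|_{L^2(\Omega)}$ after the reindexing $l\mapsto l+m$), and the convolution remainder handled by Lemma \ref{lemma3.10}. The two bookkeeping points you flag — the cancellation of $A$ inside $\mathcal{K}(z)$ so that only $\|v\|_{L^2(\Omega)}$ is needed, and the parity matching after reindexing — are exactly the points the paper's proof relies on.
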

\begin{proof}
Subtracting \eqref{LT} from \eqref{DLT}, we obtain
%
\begin{equation*}
V^{n}-V(t_{n})=I_{1}-I_{2}+I_{3}
\end{equation*}
with the related initial terms
\begin{equation}\label{ada3.a1}
\begin{split}
I_{1} =& \frac{1}{2\pi i} \int_{\Gamma^{\tau}_{\theta,\kappa}} e^{zt_{n}}  \mathcal{K}(z) v dz,~~\mathcal{K}(z){\rm ~in~ Lemma~\ref{addLemma 3.6}},\\
I_{2} =& \frac{1}{2\pi i} \int_{\Gamma_{\theta,\kappa}\backslash \Gamma^{\tau}_{\theta,\kappa}} e^{zt_{n}}  (z^{\alpha}-A)^{-1} z^{-1}  Av dz,\\
\end{split}
\end{equation}
and the related source term
\begin{equation*}
\begin{split}
I_{3} =& \frac{\tau}{2\pi i}\int_{\Gamma^{\tau}_{\theta,\kappa}} e^{zt_n} \left( \delta^{\alpha}_{\tau, k}(e^{-z\tau}) -A\right)^{-1} \delta^{m}_{\tau, k}(e^{-z\tau})   \widetilde{G} (e^{-z\tau}) dz \\
& - \frac{1}{2\pi i} \int_{\Gamma_{\theta, \kappa}} e^{zt_n}  (z^{\alpha}-A)^{-1} z^{m} \widehat{G}(z) dz.
\end{split}
\end{equation*}

Similar to the way performed   in Lemma \ref{lemma3.9}, we estimate
 \begin{equation}\label{ad3.I1}
\left\|I_{1}  \right\|_{ {L^2(\Omega)}}\leq \left\{ \begin{array}
 {l@{\quad} l}
  c \tau^{m+1} t_{n}^{-m-1} \left\| v \right\|_{ {L^2(\Omega)}} + c\tau^{k} t_{n}^{-k} \left\| v \right\|_{ {L^2(\Omega)}}, & m=1,3,5,\\
  c \tau^{m+2} t_{n}^{-m-2} \left\| v \right\|_{ {L^2(\Omega)}} + c\tau^{k} t_{n}^{-k} \left\| v \right\|_{ {L^2(\Omega)}},&m=2,4,6.
 \end{array}
 \right.
\end{equation}
Using the resolvent estimate \eqref{fractional resolvent estimate}, we estimate the second term $I_{2}$ as following
\begin{equation} \label{add3.162}
\left\|I_{2}\right\|_{ {L^2(\Omega)}}
\leq c\int_{\Gamma_{\theta,\kappa}\backslash \Gamma^{\tau}_{\theta,\kappa}} {\left|e^{zt_{n}}\right||z|^{-1} \left\| v \right\|_{ {L^2(\Omega)}}} |dz|
\leq c\tau^{k} t^{-k}_{n}\left\| v \right\|_{ {L^2(\Omega)}},
\end{equation}
since
\begin{equation}\label{add3.16}
\begin{split}
\int_{\Gamma_{\theta,\kappa}\backslash \Gamma^{\tau}_{\theta,\kappa}} \left|e^{zt_{n}}\right||z|^{-1}  |dz|
& = \int^{\infty}_{\frac{\pi}{\tau \sin \theta}} e^{r t_{n} \cos \theta} r ^{-1}  dr \leq c \tau^{k} t^{-k}_{n}
\end{split}
\end{equation}
with $1\leq (\frac{\sin \theta}{\pi})^{k} \tau^{k} r^{k}$, $r\tau\geq \frac{\pi}{\sin \theta}$.
 { Then we have $\left\|I_{1}  \right\|_{ {L^2(\Omega)}}+\left\|I_{2}  \right\|_{ {L^2(\Omega)}}\leq \left\|J_v \right\|_{ {L^2(\Omega)}}.$ }


According to  Lemmas \ref{lemma3.9} and \ref{lemma3.10}, and the general source function  \eqref{gs3.3}, i.e.,
$$G(t) = \sum_{l=0}^{k-1}  \frac{t^{(l+m)}}{(l+m)!}  g^{(l)}(0)  + \frac{t^{k+m-1}}{(k+m-1)!}  \ast  g^{(k)}(t),$$
it yields $\left\|I_3 \right\|_{ {L^2(\Omega)}}\leq \left\|J_g \right\|_{ {L^2(\Omega)}}+ c\tau^{k}  \int_{0}^{t_{n}}  (t_n-s)^{\alpha-1} \left\| g^{(k)}(s) \right\|_{ {L^2(\Omega)}}  ds$.
The proof is completed.
\end{proof}

\section{Convergence analysis: Singular  source function $t^{\mu}q$, $\mu>-1$}\label{Se:WSST}
%
We next  consider the singular source term $g(t)=t^{\mu}q$ with $ \mu> -1$ for  \eqref{rrfee}.
We  introduce the polylogarithm function
\begin{equation}\label{polylogarithm function}
Li_{p}(\xi)= \sum_{j=1}^{\infty} \frac{\xi^{j}}{j^{p}},~~p\notin \mathbb{N}
\end{equation}
with the Riemann zeta function $\zeta(p)=Li_{p}(1)$.

Let  $G(t)=J^{m}g(t) = \frac{\Gamma(\mu+1)t^{\mu+m}}{\Gamma(\mu+m+1)}q$ with the  Laplace transform  $\widehat{G}(z) =  \frac{\Gamma(\mu+1)}{z^{\mu+m+1}} q$.
From  \eqref{LT} and \eqref{DLT},   it yields  the  continuous  solution
\begin{equation*}
\begin{split}
V(t)
&= \frac{1}{2\pi i} \int_{\Gamma_{\theta, \kappa}} e^{zt}  (z^{\alpha}-A)^{-1}\left( z^{-1}Av   + \frac{\Gamma(\mu+1)}{z^{\mu+1}} q \right) dz,
\end{split}
\end{equation*}
and the  discrete solution
\begin{equation*}
V^{n}=\frac{\tau}{2\pi i}\int_{\Gamma^{\tau}_{\theta,\kappa}} e^{zt_{n}} (\delta^{\alpha}_{\tau, k}(e^{-z\tau})-A)^{-1} \delta^{m}_{\tau, k}(e^{-z\tau})  \left( \frac{\gamma_{m}(e^{-z\tau})}{m!} \tau^{m} A v + \widetilde{G}(e^{-z\tau}) \right) dz
\end{equation*}
with
\begin{equation*}
\begin{split}
\widetilde{G}(\xi)
= \sum^{\infty}_{n=1} G^{n} \xi^{n}
&= q \frac{ \Gamma(\mu+1) \tau^{\mu+m} \sum^{\infty}_{n=1} n^{\mu+m}\xi^{n}}{\Gamma(\mu+m+1)}
= q \frac{ \Gamma(\mu+1) \tau^{\mu+m} Li_{-\mu-m}(\xi)}{\Gamma(\mu+m+1)}.
\end{split}
\end{equation*}
\begin{lemma}\cite{JLZ:2016}\label{addLemma:LipCA}
Let $|z\tau|\leq \frac{\pi}{\sin\theta}$ and  $\theta>\pi/2$ be close to $\pi/2$.
Then we have
\begin{equation*}
Li_{p}(e^{-z\tau}) = \Gamma(1-p)(z\tau)^{p-1} + \sum_{j=0}^{\infty} (-1)^{j} \zeta(p-j)\frac{(z\tau)^{j}}{j!},~p\notin \mathbb{N},
\end{equation*}
and the  infinite  series converges absolutely. { Here $\zeta$ denotes the Riemann zeta function.}
\end{lemma}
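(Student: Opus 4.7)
The plan is to derive this expansion by a Mellin--Barnes contour argument. Starting from the defining series $Li_p(e^{-w}) = \sum_{j\geq 1} j^{-p} e^{-jw}$, a termwise application of the Mellin transform $\int_0^\infty w^{s-1}(\cdot)\,dw$ produces $\Gamma(s)\zeta(p+s)$ in the vertical strip $\Re s > \max(0, 1-\Re p)$. Mellin inversion then yields the representation
\begin{equation*}
Li_p(e^{-w}) = \frac{1}{2\pi i}\int_{c-i\infty}^{c+i\infty}\Gamma(s)\,\zeta(p+s)\,w^{-s}\,ds
\end{equation*}
for any admissible $c$ in this strip.

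I would then shift the contour to the left, to $\Re s = -N - \tfrac12$ for arbitrary $N\in\mathbb{N}$, and apply the residue theorem. The integrand has a simple pole at $s=1-p$ coming from $\zeta(p+s)$ with residue $\Gamma(1-p)w^{p-1}$, together with simple poles at $s=-j$ from $\Gamma(s)$ for $j=0,1,\dots,N$, each contributing $(-1)^j \zeta(p-j)\,w^j/j!$. The hypothesis $p\notin\mathbb{N}$ is used here: it guarantees the zeta pole at $s=1-p$ does not coincide with a nonpositive-integer pole of $\Gamma$, which would otherwise cause a double pole and introduce logarithmic corrections. Summing residues and substituting $w=z\tau$ then delivers the claimed identity up to the shifted contour integral.

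It remains to let $N\to\infty$ and verify both the absolute convergence of the series and the vanishing of the remainder integral. Both rely on the same asymptotic ingredient: combining the functional equation of $\zeta$ with Stirling's formula yields $|\zeta(p-j)|/j! \lesssim (2\pi)^{-j} j^{-\Re p}$ for large $j$, so the series has radius of convergence $2\pi$ in $|w|$. The hypothesis $|z\tau| \leq \pi/\sin\theta$ with $\theta$ slightly greater than $\pi/2$ keeps $|z\tau|$ just above $\pi$, hence safely within this radius. The main obstacle is the uniform decay estimate for $\Gamma(\sigma+it)\zeta(p+\sigma+it)w^{-(\sigma+it)}$ along the shifted vertical contour: one must invoke classical vertical-strip bounds on $|\zeta|$ together with the exponential decay of $|\Gamma|$ on vertical lines to conclude that the remainder integral is $O(|w|^{N+1/2})$ and hence vanishes as $N\to\infty$, completing the expansion.
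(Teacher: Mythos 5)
The paper offers no proof of this lemma at all---it is quoted directly from \cite{JLZ:2016}, which itself rests on the classical singular expansion of the polylogarithm near $\xi=1$---and your Mellin--Barnes derivation (Mellin transform giving $\Gamma(s)\zeta(p+s)$, contour shift past the pole of $\zeta$ at $s=1-p$ and the poles of $\Gamma$ at $s=-j$, with $p\notin\mathbb{N}$ preventing a pole collision, and the functional equation of $\zeta$ giving radius of convergence $2\pi$) is precisely the standard proof of that expansion, so your argument is correct and matches the source. The one step worth making explicit is that the termwise Mellin transform of $\sum_{j\ge 1}j^{-p}e^{-jz\tau}$ requires $\Re(z\tau)>0$, whereas points of $\Gamma^{\tau}_{\theta,\kappa}$ can have $|\arg(z\tau)|=\theta>\pi/2$; since both sides of the identity are analytic in the slit disk $\{|z\tau|<2\pi,\ |\arg(z\tau)|<\pi\}$, a final appeal to analytic continuation closes this gap.
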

\begin{lemma}\label{addLemma4.5}
Let  $\gamma_{\mu+m}(\xi)=\sum^{\infty}_{n=1}n^{\mu+m} \xi^{n}$
with $1\leq m\leq k\leq 6$. Then there { exists} a  positive constant $c$ such that
\begin{equation*}
\left| \frac{\gamma_{\mu+m}(e^{-z\tau})}{\Gamma{(\mu+m+1)}}  \tau^{\mu+m+1} - \frac{1}{z^{\mu+m+1}} \right| \leq
c \tau^{\mu+m+1}, ~~\mu>-1.
\end{equation*}
\end{lemma}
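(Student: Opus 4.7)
The plan is to identify $\gamma_{\mu+m}(\xi)$ with a polylogarithm and reduce the claim to a direct application of Lemma \ref{addLemma:LipCA}. By the definition \eqref{polylogarithm function},
$$\gamma_{\mu+m}(\xi)=\sum_{n=1}^{\infty}n^{\mu+m}\xi^{n}=Li_{-(\mu+m)}(\xi).$$
Since $\mu>-1$ and $m\geq 1$, we have $\mu+m>0$, so the index $p:=-(\mu+m)$ is strictly negative and in particular $p\notin\mathbb{N}$. Moreover, on the truncated contour $\Gamma^{\tau}_{\theta,\kappa}$ relevant for the inversion formula, $|z\tau|\leq \pi/\sin\theta$, which is exactly the region in which Lemma \ref{addLemma:LipCA} is valid.

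Applying Lemma \ref{addLemma:LipCA} with $p=-(\mu+m)$ gives
$$Li_{-(\mu+m)}(e^{-z\tau})=\Gamma(\mu+m+1)(z\tau)^{-(\mu+m+1)}+\sum_{j=0}^{\infty}(-1)^{j}\zeta(-(\mu+m)-j)\frac{(z\tau)^{j}}{j!}.$$
Multiplying through by $\tau^{\mu+m+1}/\Gamma(\mu+m+1)$, the leading (non-analytic) term produces exactly $z^{-(\mu+m+1)}$, which cancels the second term in the quantity to be bounded. The residual therefore equals
$$R(z):=\frac{\tau^{\mu+m+1}}{\Gamma(\mu+m+1)}\sum_{j=0}^{\infty}(-1)^{j}\zeta(-(\mu+m)-j)\frac{(z\tau)^{j}}{j!},$$
and the lemma is reduced to showing $|R(z)|\leq c\tau^{\mu+m+1}$.

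Since $|z\tau|\leq \pi/\sin\theta$, the absolute convergence asserted in Lemma \ref{addLemma:LipCA} gives a uniform (in $z$) bound on the analytic series by some constant $c$ depending only on $\mu$, $m$ and $\theta$; note also that every value $\zeta(-(\mu+m)-j)$ is finite because the argument $-(\mu+m)-j<0$ never equals the pole $1$ of $\zeta$. Factoring out the prefactor $\tau^{\mu+m+1}$ yields the claimed estimate. I do not anticipate any substantial obstacle here: the result is essentially a one-line corollary of Lemma \ref{addLemma:LipCA}, the only book-keeping being the identification $\gamma_{\mu+m}=Li_{-(\mu+m)}$ and the verification that $p=-(\mu+m)\notin\mathbb{N}$ and the $\zeta$-values in the remainder are regular.
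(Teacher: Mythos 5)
Your proposal is correct and follows essentially the same route as the paper: identify $\gamma_{\mu+m}=Li_{-(\mu+m)}$, invoke Lemma \ref{addLemma:LipCA} so that the singular term $\Gamma(\mu+m+1)(z\tau)^{-(\mu+m+1)}$ cancels $z^{-(\mu+m+1)}$ exactly, and bound the remaining absolutely convergent power series uniformly on $|z\tau|\leq \pi/\sin\theta$. The only cosmetic difference is that the paper first dispatches $\mu\in\mathbb{N}$ separately via Lemma \ref{Lemma nn3.3} before applying the polylogarithm expansion to $\mu\notin\mathbb{N}$, whereas you treat all $\mu>-1$ at once by observing that $p=-(\mu+m)<0$ is never in $\mathbb{N}$; both are consistent with Lemma \ref{addLemma:LipCA} as stated.
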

\begin{proof}
 From Lemma \ref{Lemma nn3.3}, the desired result is obtained with $\mu \in \mathbb{N}$. We next prove the case $\mu \notin \mathbb{N}$.
Using  \eqref{polylogarithm function} and Lemma \ref{addLemma:LipCA}, { it yields}
\begin{equation*}
\begin{split}
&\left| \frac{\gamma_{\mu+m}(e^{-z\tau})}{\Gamma{(\mu+m+1)}}  \tau^{\mu+m+1} - \frac{1}{z^{\mu+m+1}} \right| \\
&=    \left| \frac{\tau^{\mu+m+1}}{\Gamma(\mu+m+1)} \left(Li_{-\mu-m}(e^{-z\tau})  -  \frac{\Gamma(\mu+m+1)}{(z\tau)^{\mu+m+1}} \right)  \right|\\
&\leq  \frac{\tau^{\mu+m+1}}{\Gamma(\mu+m+1)} \left| \sum_{j=0}^{\infty} (-1)^{j} \zeta(-\mu-m-j)\frac{(z\tau)^{j}}{j!} \right|
\leq  c \tau^{\mu+m+1}.
\end{split}
\end{equation*}
 The proof is completed.
\end{proof}

\begin{lemma}\label{addLemma 4.3}
Let $\delta_{\tau, k}(\xi)$ with $k \leq 6$ be given in \eqref{2.2}. Then there { exists} a  positive constant $c$ such that
\begin{equation*}
\left\| \left( \delta^{\alpha}_{\tau, k}(e^{-z\tau}) -A\right)^{-1} \delta^{m}_{\tau, k}(e^{-z\tau}) - (z^{\alpha}-A)^{-1} z^{m} \right\| \leq c\tau^{k}|z|^{k+m-\alpha} \quad \forall z\in \Gamma^{\tau}_{\theta,\kappa}.
\end{equation*}
\end{lemma}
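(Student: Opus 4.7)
The plan is to write the target expression as a telescoping sum of two terms, each of which can be bounded by previously established estimates, namely Lemma \ref{Lemma 2.3}, the discrete resolvent bound \eqref{discrete fractional resolvent estimate}, and the resolvent identity \eqref{identity1}. Concretely, I would begin with the decomposition
\begin{equation*}
\begin{split}
& \left( \delta^{\alpha}_{\tau, k}(e^{-z\tau}) -A\right)^{-1} \delta^{m}_{\tau, k}(e^{-z\tau}) - (z^{\alpha}-A)^{-1} z^{m} \\
& \qquad = \left( \delta^{\alpha}_{\tau, k}(e^{-z\tau}) -A\right)^{-1} \left[ \delta^{m}_{\tau, k}(e^{-z\tau}) - z^{m} \right] \\
& \qquad \quad + \left[ \left( \delta^{\alpha}_{\tau, k}(e^{-z\tau}) -A\right)^{-1} - (z^{\alpha}-A)^{-1} \right] z^{m}.
\end{split}
\end{equation*}
This mirrors exactly the split used for $I$ and $II$ in Lemma \ref{Lemma 3.11}, with the scalar factor $\gamma_l(e^{-z\tau})\tau^{l+1}/l!$ removed.

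For the first piece, I would first upgrade the scalar estimate $|\delta_{\tau,k}(e^{-z\tau}) - z| \leq c\tau^k |z|^{k+1}$ from Lemma \ref{Lemma 2.3} to its $m$th-power analogue by the algebraic identity $a^m - b^m = (a-b)\sum_{j=0}^{m-1} a^j b^{m-1-j}$, combined with the two-sided bound $c_1|z| \leq |\delta_{\tau,k}(e^{-z\tau})| \leq c_2|z|$. This yields $|\delta^{m}_{\tau, k}(e^{-z\tau}) - z^{m}| \leq c\tau^k |z|^{k+m}$. Multiplying by the discrete resolvent bound \eqref{discrete fractional resolvent estimate} then gives the first term bounded by $c\tau^k|z|^{k+m-\alpha}$, as desired.

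For the second piece, I would plug in the resolvent identity \eqref{identity1} to write it as
\begin{equation*}
\bigl(z^{\alpha} - \delta^{\alpha}_{\tau, k}(e^{-z\tau}) \bigr) \left( \delta^{\alpha}_{\tau, k}(e^{-z\tau}) -A\right)^{-1} (z^{\alpha}-A)^{-1} z^{m},
\end{equation*}
and then directly apply the four factor estimates $|z^{\alpha} - \delta^{\alpha}_{\tau,k}(e^{-z\tau})| \leq c\tau^k|z|^{k+\alpha}$ (Lemma \ref{Lemma 2.3}), $\|(\delta^{\alpha}_{\tau,k}(e^{-z\tau}) - A)^{-1}\| \leq c|z|^{-\alpha}$ (the discrete resolvent bound), $\|(z^{\alpha} - A)^{-1}\| \leq c|z|^{-\alpha}$ \eqref{fractional resolvent estimate}, and the trivial $|z^m| = |z|^m$. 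Multiplying gives $c\tau^k |z|^{k+\alpha} \cdot |z|^{-\alpha} \cdot |z|^{-\alpha} \cdot |z|^m = c\tau^k |z|^{k+m-\alpha}$.

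Combining via the triangle inequality yields the stated bound on all of $\Gamma^{\tau}_{\theta,\kappa}$. I do not anticipate a genuine obstacle here: this is essentially the $l = m-1$ special case of Lemma \ref{Lemma 3.11} without the $\gamma_l$ factor, so every ingredient is already in place; the only mild care needed is the routine bootstrap from the $\delta_{\tau,k}$ estimate to the $\delta^m_{\tau,k}$ estimate via the telescoping algebraic identity.
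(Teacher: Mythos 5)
Your proposal is correct and follows essentially the same route as the paper: the identical splitting into $I = ( \delta^{\alpha}_{\tau, k}(e^{-z\tau}) -A)^{-1} ( \delta^{m}_{\tau, k}(e^{-z\tau})   -  z^{m} )$ and $II = ( ( \delta^{\alpha}_{\tau, k}(e^{-z\tau}) -A)^{-1} - (z^{\alpha}-A)^{-1} ) z^{m}$, with the same ingredients (Lemma \ref{Lemma 2.3}, the discrete resolvent bound, and the resolvent identity). Your explicit telescoping step $a^m-b^m=(a-b)\sum_{j=0}^{m-1}a^jb^{m-1-j}$ just fills in a detail the paper leaves implicit when it cites Lemma \ref{Lemma 2.3} for the bound $\|I\|\le c\tau^k|z|^{k+m-\alpha}$.
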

\begin{proof}
Let
\begin{equation*}
 \left( \delta^{\alpha}_{\tau, k}(e^{-z\tau}) -A\right)^{-1} \delta^{m}_{\tau, k}(e^{-z\tau})    - (z^{\alpha}-A)^{-1} z^{m} = I + II
\end{equation*}
with
\begin{equation*}
\begin{split}
I  & = \left( \delta^{\alpha}_{\tau, k}(e^{-z\tau}) -A\right)^{-1} \left( \delta^{m}_{\tau, k}(e^{-z\tau})   -  z^{m} \right),  \\
II & = \left( \left( \delta^{\alpha}_{\tau, k}(e^{-z\tau}) -A\right)^{-1} - (z^{\alpha}-A)^{-1} \right) z^{m}.
\end{split}
\end{equation*}
From  \eqref{discrete fractional resolvent estimate} and Lemma \ref{Lemma 2.3}, we obtain
\begin{equation*}
\|I\| \leq  c \tau^{k} |z|^{k+m-\alpha}.
\end{equation*}
Using  Lemma \ref{Lemma 2.3}, \eqref{discrete fractional resolvent estimate}, \eqref{fractional resolvent estimate} and the identity
$$\left( \delta^{\alpha}_{\tau, k}(e^{-z\tau}) -A\right)^{-1} - (z^{\alpha}-A)^{-1}=\left( z^{\alpha} - \delta^{\alpha}_{\tau, k}(e^{-z\tau}) \right) \left( \delta^{\alpha}_{\tau, k}(e^{-z\tau}) -A\right)^{-1} (z^{\alpha}-A)^{-1},$$
we estimate $II$ as following
\begin{equation*}
\|II\|  \leq c \tau^{k} |z|^{k +\alpha}  c |z|^{-\alpha} c |z|^{-\alpha}  |z|^{m} \leq c \tau^{k} |z|^{k+m-\alpha}.
\end{equation*}
According to the triangle inequality,  the desired result is obtained.
\end{proof}

\begin{theorem}\label{addtheorema4.1}
Let $V(t_{n})$ and $V^{n}$ be the solutions of  \eqref{rrfee} and \eqref{2.3}, respectively. Let $v\in L^{2}(\Omega)$ and $g(x,t)=t^{\mu}q $, $\mu > -1$, $q \in L^{2}(\Omega)$.  Then the following error estimate holds for any $t_n>0$
\begin{equation*}
\left\|V^{n}-V(t_{n})\right\|_{ {L^2(\Omega)}} \leq  \left\|J_v  \right\|_{ {L^2(\Omega)}}
+ c\tau^{\mu + m+1} t^{\alpha-m-1}_{n}\| q \|_{ {L^2(\Omega)}}  + c \tau^{k} t_{n}^{\alpha+\mu-k}\left\| q \right\|_{ {L^2(\Omega)}}
\end{equation*}
with $\left\|J_v  \right\|_{ {L^2(\Omega)}}$  { in \eqref{initialesti}}.
\end{theorem}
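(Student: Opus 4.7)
My plan is to follow the same high-level decomposition as Theorem \ref{addtheorema3.1}: subtract the discrete solution representation from the contour integral for $V(t_n)$, split the resulting integrand into an initial-data piece and a source-term piece, and then exploit the new structural identity $\widetilde{G}(e^{-z\tau})=\frac{\Gamma(\mu+1)\tau^{\mu+m}\gamma_{\mu+m}(e^{-z\tau})}{\Gamma(\mu+m+1)}\,q$ to avoid Taylor expansions of the (singular) source. The initial-data piece is literally $I_1-I_2$ from \eqref{ada3.a1} with $v$ the given initial data, so the bound $\|J_v\|_{L^2(\Omega)}$ follows verbatim from the estimates \eqref{ad3.I1}--\eqref{add3.162} used in the proof of Theorem \ref{addtheorema3.1}. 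The only genuinely new work is in estimating the source contribution
\begin{equation*}
I_3^{\rm src}=\frac{\tau}{2\pi i}\int_{\Gamma^{\tau}_{\theta,\kappa}}\!\!e^{zt_n}\bigl(\delta^{\alpha}_{\tau,k}(e^{-z\tau})-A\bigr)^{-1}\delta^{m}_{\tau,k}(e^{-z\tau})\widetilde{G}(e^{-z\tau})\,dz-\frac{1}{2\pi i}\int_{\Gamma_{\theta,\kappa}}\!\!e^{zt_n}(z^{\alpha}-A)^{-1}\tfrac{\Gamma(\mu+1)}{z^{\mu+1}}q\,dz.
\end{equation*}

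Next I would factor out $\Gamma(\mu+1)\,q$ and reduce to estimating the symbol
\begin{equation*}
\mathcal{S}(z)=\bigl(\delta^{\alpha}_{\tau,k}(e^{-z\tau})-A\bigr)^{-1}\delta^{m}_{\tau,k}(e^{-z\tau})\,\frac{\tau^{\mu+m+1}\gamma_{\mu+m}(e^{-z\tau})}{\Gamma(\mu+m+1)}-(z^{\alpha}-A)^{-1}\frac{z^{m}}{z^{\mu+m+1}}
\end{equation*}
on $\Gamma^{\tau}_{\theta,\kappa}$, together with the standard tail piece on $\Gamma_{\theta,\kappa}\setminus\Gamma^{\tau}_{\theta,\kappa}$. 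I would split $\mathcal{S}(z)=S_1(z)+S_2(z)$ with
\begin{equation*}
S_1(z)=\bigl(\delta^{\alpha}_{\tau,k}(e^{-z\tau})-A\bigr)^{-1}\delta^{m}_{\tau,k}(e^{-z\tau})\Bigl[\tfrac{\tau^{\mu+m+1}\gamma_{\mu+m}(e^{-z\tau})}{\Gamma(\mu+m+1)}-\tfrac{1}{z^{\mu+m+1}}\Bigr],
\end{equation*}
\begin{equation*}
S_2(z)=\Bigl[\bigl(\delta^{\alpha}_{\tau,k}(e^{-z\tau})-A\bigr)^{-1}\delta^{m}_{\tau,k}(e^{-z\tau})-(z^{\alpha}-A)^{-1}z^{m}\Bigr]\frac{1}{z^{\mu+m+1}}.
\end{equation*}
For $S_1$, combining the discrete resolvent bound \eqref{discrete fractional resolvent estimate}, the symbol bound $|\delta^{m}_{\tau,k}(e^{-z\tau})|\leq c|z|^{m}$ from Lemma \ref{Lemma 2.3}, and Lemma \ref{addLemma4.5}, we obtain $\|S_1(z)\|\leq c\tau^{\mu+m+1}|z|^{m-\alpha}$. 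For $S_2$, Lemma \ref{addLemma 4.3} gives $\|S_2(z)\|\leq c\tau^{k}|z|^{k+m-\alpha}\cdot|z|^{-\mu-m-1}=c\tau^{k}|z|^{k-\mu-\alpha-1}$.

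Integrating $e^{zt_n}\mathcal{S}(z)$ over $\Gamma^{\tau}_{\theta,\kappa}$ and using the standard contour estimates already invoked in \eqref{ad3.3.09}, the $S_1$ contribution yields $c\tau^{\mu+m+1}t_n^{\alpha-m-1}$ and the $S_2$ contribution yields $c\tau^{k}t_n^{\alpha+\mu-k}$. For the tail integral on $\Gamma_{\theta,\kappa}\setminus\Gamma^{\tau}_{\theta,\kappa}$, the resolvent bound \eqref{fractional resolvent estimate} together with the trick $1\leq (\tau r\sin\theta/\pi)^{k}$ (as in \eqref{add3.16}) converts $\int_{\pi/(\tau\sin\theta)}^{\infty}e^{rt_n\cos\theta}r^{-\mu-\alpha-1}dr$ into $c\tau^{k}t_n^{\alpha+\mu-k}\|q\|_{L^2(\Omega)}$, which is absorbed by the second source-term bound. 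Assembling $\|J_v\|_{L^2(\Omega)}$ for the initial data and the two source bounds yields the stated estimate.

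\paragraph{Main obstacle.} The delicate point I expect to watch is the uniformity in $\mu$: since $\mu$ ranges over $(-1,\infty)$ and may coincide with a nonnegative integer, one must invoke Lemma \ref{addLemma4.5} (which already handled both $\mu\in\mathbb{N}$ via Lemma \ref{Lemma nn3.3} and $\mu\notin\mathbb{N}$ via the polylogarithm expansion of Lemma \ref{addLemma:LipCA}); the domain restriction $|z\tau|\leq\pi/\sin\theta$ in Lemma \ref{addLemma:LipCA} is precisely what forces the contour to be truncated to $\Gamma^{\tau}_{\theta,\kappa}$, and this matches the way the tail is separated off in the argument above. A secondary check is that the exponent $k-\mu-\alpha-1$ appearing in the $r$-integral remains integrable near $r=\kappa$ and is dominated by the exponential factor at infinity, so the contour estimates genuinely reproduce $t_n^{\alpha+\mu-k}$ for every $\mu>-1$ and every admissible $k$.
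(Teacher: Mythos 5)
Your proposal is correct and follows essentially the same route as the paper: the initial-data terms are handled verbatim via \eqref{ad3.I1}--\eqref{add3.162}, your $S_1$ and $S_2$ are exactly the paper's $I_{31}$ and $I_{32}$ (bounded by Lemma \ref{addLemma4.5} and Lemma \ref{addLemma 4.3}, respectively), and your tail integral is the paper's $I_4$. The resulting bounds $c\tau^{\mu+m+1}t_n^{\alpha-m-1}\|q\|_{L^2(\Omega)}$ and $c\tau^k t_n^{\alpha+\mu-k}\|q\|_{L^2(\Omega)}$ match the paper's estimates term for term.
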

\begin{proof}
From Theorem \ref{addtheorema3.1}, the desired result is obtained with $\mu \in \mathbb{N}$. We next prove the case $\mu \notin \mathbb{N}$.
Subtracting \eqref{LT} from \eqref{DLT}, we obtain
\begin{equation*}
V^{n}-V(t_{n})=I_{1}-I_{2} + I_{3} - I_{4},
\end{equation*}
where  $I_{1}$, $I_{2}$ { are defined by}  \eqref{ada3.a1}, and
\begin{equation*}
\begin{split}
I_{3} =& \frac{1}{2\pi i}\int_{\Gamma^{\tau}_{\theta,\kappa}} e^{zt_{n}} \left[\left(\delta^{\alpha}_{\tau, k}(e^{-z\tau})-A \right)^{-1} \delta^{m}_{\tau, k}(e^{-z\tau}) \tau \widetilde{G} (e^{-z\tau}) - (z^{\alpha}-A)^{-1} z^{m} \widehat{G}(z)\right] dz, \\
I_{4} =& \frac{1}{2\pi i} \int_{\Gamma_{\theta,\kappa}\backslash \Gamma^{\tau}_{\theta,\kappa}} e^{zt_n}  (z^{\alpha}-A)^{-1} z^{m}  \widehat{G}(z) dz.
\end{split}
\end{equation*}

According to \eqref{ad3.I1} and \eqref{add3.162}, we estimate
{  $\left\|I_{1}  \right\|_{ {L^2(\Omega)}}+\left\|I_{2}  \right\|_{ {L^2(\Omega)}}\leq \left\|J_v \right\|_{ {L^2(\Omega)}}$. }
From \eqref{add3.16}, it leads to
\begin{equation*}
\begin{split}
\left\|I_{4}\right\|_{ {L^2(\Omega)}}
&\leq c\int_{\Gamma_{\theta,\kappa}\backslash \Gamma^{\tau}_{\theta,\kappa}} {\left|e^{zt_{n}}\right||z|^{-\alpha} |z|^{-\mu-1} } \left\| q \right\|_{ {L^2(\Omega)}}  |dz|
\leq c\tau^{k} t^{\alpha+\mu-k}_{n}  \left\| q \right\|_{ {L^2(\Omega)}}.
\end{split}
\end{equation*}
Finally we consider $I_{3}= I_{31} + I_{32}$
with
\begin{equation*}
\begin{split}
I_{31} =& \frac{1}{2\pi i}\int_{\Gamma^{\tau}_{\theta,\kappa}} e^{zt_{n}} \left( \delta^{\alpha}_{\tau, k}(e^{-z\tau}) -A\right)^{-1} \delta^{m}_{\tau, k}(e^{-z\tau})  \left(\tau \widetilde{G} (e^{-z\tau})
-   \widehat{G}(z)\right) dz,\\
I_{32} =& \frac{1}{2\pi i} \int_{\Gamma^{\tau}_{\theta,\kappa}} e^{zt_n}  \left(\left( \delta^{\alpha}_{\tau, k}(e^{-z\tau}) -A\right)^{-1} \delta^{m}_{\tau, k}(e^{-z\tau})  - (z^{\alpha}-A)^{-1} z^{m} \right) \widehat{G}(z) dz.
\end{split}
\end{equation*}
According to \eqref{discrete fractional resolvent estimate}, Lemmas \ref{Lemma 2.3} and \ref{addLemma4.5}, there exists
\begin{equation*}
\left\|I_{31}\right\|_{ {L^2(\Omega)}}
\leq c \tau^{\mu+m+1}  \left\| q \right\|_{ {L^2(\Omega)}} \int_{\Gamma^{\tau}_{\theta,\kappa}} \left|e^{zt_{n}}\right| |z|^{m-\alpha} |dz|
\leq c \tau^{\mu+m+1}  t_{n}^{\alpha-m-1} \left\| q \right\|_{ {L^2(\Omega)}}.
\end{equation*}
From  Lemma \ref{addLemma 4.3} and $\widehat{G}(z) =  \frac{\Gamma(\mu+1)}{z^{\mu+m+1}} q$, it yields
\begin{equation*}
\begin{split}
\left\|I_{32}\right\|_{ {L^2(\Omega)}}
\leq &  c \tau^{k} \left\| q \right\|_{ {L^2(\Omega)}} \int_{\Gamma^{\tau}_{\theta,\kappa}} \left|e^{zt_{n}}\right| |z|^{k+m-\alpha} |z|^{-\mu-m-1}  |dz|
\leq    c \tau^{k}  t_{n}^{\alpha+\mu-k}\left\| q \right\|_{ {L^2(\Omega)}}.
\end{split}
\end{equation*}
The proof is completed.
\end{proof}

\section{Convergence analysis: Source function $t^{\mu}\circ f(t)$ with $\mu>-1$}
Based on the analysis of   Sections 3 and 4,
we next  provide   the detailed error estimates for { {the}} model  \eqref{fee}  with the singular/low regularity   source function   $t^{\mu}\circ f(t)$.
\subsection{Convergence analysis: Convolution source function $t^{\mu} \ast f(t)$, $\mu>-1$}
Let $f(t) = \sum_{j=0}^{k-1}  \frac{t^{j}}{j!}f^{(j)}(0) + \frac{t^{k-1}}{(k-1)!} \ast f^{(k)}(t)$.
Then  we obtain
\begin{equation*}
g(t) =  t^{\mu} \ast f(t) =   \sum_{j=0}^{k-1} \frac{ \Gamma(\mu+1)t^{\mu+j+1}}{\Gamma(\mu+j+2)}f^{(j)}(0)  +  t^{\mu} \ast \frac{t^{k-1}}{(k-1)!} \ast f^{(k)}(t).
\end{equation*}
Let  $G(t) = J^{m}g(t)=\frac{\Gamma(\mu+1)t^{\mu+m}}{\Gamma(\mu+m+1)} \ast f(t)$ with $G^{(j)}(0) =0, j=0, \ldots, m-1$.  It yields
\begin{equation*}
\begin{split}
G(t)
& =  \sum_{j=0}^{k-1} \frac{ \Gamma(\mu+1)t^{\mu+j+m+1}}{\Gamma(\mu+j+m+2)}f^{(j)}(0)  +  \frac{t^{k+m-1}}{(k+m-1)!} \ast t^{\mu} \ast f^{(k)}(t).
\end{split}
\end{equation*}

\begin{lemma}\label{addLemma4.6}
Let $V(t_{n})$ and $V^{n}$ be the solutions of \eqref{rrfee} and \eqref{2.3}, respectively.
Let $v=0$, $G(t):=  \frac{t^{k+m-1}}{(k+m-1)!} \ast \left( t^{\mu} \ast f^{(k)}(t)\right)$ with $\mu>-1$ and $\int_{0}^{t}  (t - s)^{\alpha-1}  s^{\mu} \ast \left\| f^{(k)}(s) \right\|_{ {L^2(\Omega)}}ds<\infty $. Then the following error estimate holds for any $t_n>0$
\begin{equation*}
\begin{split}
\left\|V(t_{n})-V^{n}\right\|_{ {L^2(\Omega)}}
&\leq c\tau^{k} \int_{0}^{t_{n}}  (t_{n} - s)^{\alpha-1}  s^{\mu} \ast \left\| f^{(k)}(s) \right\|_{ {L^2(\Omega)}}ds \\
&\leq c\tau^{k} \int_{0}^{t_{n}} (t_{n} - s)^{\alpha+\mu} \left\| f^{(k)}(s) \right\|_{ {L^2(\Omega)}}ds.
\end{split}
\end{equation*}
\end{lemma}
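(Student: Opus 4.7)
The plan is to mimic the proof of Lemma \ref{lemma3.10}, but route everything through the intermediate kernel estimate \eqref{3.0003} which was established there, since here the source $G$ carries an extra convolution factor $t^{\mu}$. Concretely, I write
\begin{equation*}
G(t)=\frac{t^{k+m-1}}{(k+m-1)!}\ast h(t),\qquad h(t):=t^{\mu}\ast f^{(k)}(t),
\end{equation*}
and, using the continuous and discrete solution representations together with the convolution identity $V(t_n)-V^n=((\mathscr{E}_{\tau}-\mathscr{E})\ast G)(t_n)$ already recorded in Lemma \ref{lemma3.10}, apply the associativity of convolution to obtain
\begin{equation*}
V(t_n)-V^n=(F\ast h)(t_n),\qquad F(t):=\left((\mathscr{E}_{\tau}-\mathscr{E})\ast \frac{t^{k+m-1}}{(k+m-1)!}\right)(t).
\end{equation*}

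The first inequality then follows immediately from the key pointwise bound \eqref{3.0003}, namely $\|F(t)\|\leq c\tau^{k}t^{\alpha-1}$ for $t>0$, which gives
\begin{equation*}
\|V(t_n)-V^n\|_{L^2(\Omega)}\leq\int_0^{t_n}\|F(t_n-s)\|\,\|h(s)\|_{L^2(\Omega)}\,ds\leq c\tau^{k}\int_0^{t_n}(t_n-s)^{\alpha-1}\,\bigl(s^{\mu}\ast \|f^{(k)}(s)\|_{L^2(\Omega)}\bigr)\,ds.
\end{equation*}

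For the second inequality I would expand the inner convolution, swap the order of integration by Fubini (legitimate under the hypothesis $\int_0^{t}(t-s)^{\alpha-1}s^{\mu}\ast\|f^{(k)}(s)\|\,ds<\infty$), and evaluate the resulting beta integral
\begin{equation*}
\int_{r}^{t_n}(t_n-s)^{\alpha-1}(s-r)^{\mu}\,ds=B(\alpha,\mu+1)(t_n-r)^{\alpha+\mu},
\end{equation*}
which is finite precisely because $\alpha>0$ and $\mu>-1$. This collapses the double integral into the advertised single integral against $(t_n-s)^{\alpha+\mu}$.

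The only step that is not purely routine is the justification of the convolution associativity and Fubini swap, since $\mathscr{E}_{\tau}$ is a sum of Dirac masses on the grid while $\mathscr{E}(t)$ has an integrable singularity $t^{\alpha-m-1}$ at the origin. Here I would rely on the estimates $\|\mathscr{E}(t)\|\leq ct^{\alpha-m-1}$ and $\|\mathscr{E}^{n}_{\tau}\|\leq ct_n^{\alpha-m}$ already obtained in the proof of Lemma \ref{lemma3.10} to show that each piece of the rearrangement is absolutely convergent; the associativity then follows term by term for $\mathscr{E}_{\tau}$ and via dominated convergence for $\mathscr{E}$. The remainder is bookkeeping.
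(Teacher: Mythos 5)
Your proposal is correct and follows essentially the same route as the paper: the authors simply invoke Lemma \ref{lemma3.10} with $g^{(k)}(t)=t^{\mu}\ast f^{(k)}(t)$, pull the norm inside the convolution, and use $t^{\alpha-1}\ast t^{\mu}=B(\alpha,\mu+1)\,t^{\alpha+\mu}$ — exactly the Beta-integral/Fubini step you describe. The only difference is cosmetic: you unfold the proof of Lemma \ref{lemma3.10} (re-deriving the kernel bound \eqref{3.0003} and the convolution representation) rather than citing it as a black box.
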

\begin{proof}
From  Lemma \ref{lemma3.10} and $g^{(k)}(t) =t^{\mu} \ast f^{(k)}(t)$, we obtain
\begin{equation*}
\begin{split}
\left\|V(t_{n})-V^{n}\right\|_{ {L^2(\Omega)}}
&\leq c\tau^{k} \int_{0}^{t_{n}} (t_n-s)^{\alpha-1} \left\|  s^{\mu} \ast  f^{(k)}(s)  \right\|_{ {L^2(\Omega)}}ds \\
& \leq c\tau^{k} \int_{0}^{t_{n}}  (t_{n} - s)^{\alpha-1}  s^{\mu} \ast \left\| f^{(k)}(s) \right\|_{ {L^2(\Omega)}}ds\\
&= c\tau^{k}\left(\left(t^{\alpha-1}\ast t^{\mu}\right) \ast \left\| f^{(k)}(t) \right\|_{ {L^2(\Omega)}}\right)_{t=t_n}\\
&  \leq c\tau^{k} \int_{0}^{t_{n}}  (t_{n} - s)^{\alpha+\mu} \left\| f^{(k)}(s) \right\|_{ {L^2(\Omega)}}ds .
\end{split}
\end{equation*}
The proof is completed.
\end{proof}
\begin{theorem}\label{addtheorema4.2}
Let $V(t_{n})$ and $V^{n}$ be the solutions of \eqref{rrfee} and \eqref{2.3}, respectively.
Let $v\in L^{2}(\Omega)$, $g(t)=t^{\mu} \ast f(t) $,  $\mu>-1$ and  $f \in C^{k-1}([0,T]; L^{2}(\Omega))$,
$\int_{0}^{t}  (t - s)^{\alpha-1}  s^{\mu} \ast \left\| f^{(k)}(s) \right\|_{ {L^2(\Omega)}}ds<\infty $.  Then the following error estimate holds for any $t_n>0$
\begin{equation*}
\begin{split}
\left\|V^{n}-V(t_{n})\right\|_{ {L^2(\Omega)}}
&\leq \left\|J_v  \right\|_{ {L^2(\Omega)}}
 + c\tau^{k} \int_{0}^{t_{n}}  (t_{n} - s)^{\alpha-1}  s^{\mu} \ast \left\| f^{(k)}(s) \right\|_{ {L^2(\Omega)}}ds\\
 &\quad +c \sum_{j=0}^{k-1}\left(  \tau^{\mu + j + m+2} t^{\alpha-m-1}_{n}    +  \tau^{k} t_{n}^{\alpha+\mu+j-k+1}\right) \left\| f^{(j)}(0) \right\|_{ {L^2(\Omega)}}  \\
\end{split}
\end{equation*}
with $\left\|J_v  \right\|_{ {L^2(\Omega)}}$  { in \eqref{initialesti}}.
\end{theorem}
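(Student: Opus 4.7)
The plan is to exploit the linearity of both the continuous problem \eqref{rrfee} and the discrete scheme \eqref{2.3} in the source term, and then reduce the estimate to Theorem \ref{addtheorema4.1} and Lemma \ref{addLemma4.6}. The starting point is Taylor's formula with integral remainder for $f$, which gives
\begin{equation*}
g(t)=t^{\mu}\ast f(t) = \sum_{j=0}^{k-1}\frac{\Gamma(\mu+1)\,t^{\mu+j+1}}{\Gamma(\mu+j+2)}f^{(j)}(0) + t^{\mu}\ast\frac{t^{k-1}}{(k-1)!}\ast f^{(k)}(t).
\end{equation*}
Since both $V(t_n)$ and $V^n$ depend linearly on $(v,g)$, the error $V^n-V(t_n)$ decomposes into three groups: (i) the pure initial-data contribution driven by $v$; (ii) $k$ contributions driven by the singular monomials $c_j\,t^{\mu+j+1}f^{(j)}(0)$; and (iii) the remainder contribution driven by the triple convolution. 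I will estimate each group separately and sum via the triangle inequality.

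For group (i), the arguments used to bound $I_1$ and $I_2$ in Theorems \ref{addtheorema3.1} and \ref{addtheorema4.1} apply without modification and deliver exactly $\|J_v\|_{L^2(\Omega)}$. For each term in group (ii), I invoke Theorem \ref{addtheorema4.1} with vanishing initial data, spatial factor $q_j=\Gamma(\mu+1)f^{(j)}(0)/\Gamma(\mu+j+2)$, and effective exponent $\mu_j:=\mu+j+1>-1$; this produces the bound
\begin{equation*}
c\tau^{\mu+j+m+2}\,t_n^{\alpha-m-1}\|f^{(j)}(0)\|_{L^2(\Omega)} + c\tau^{k}\,t_n^{\alpha+\mu+j+1-k}\|f^{(j)}(0)\|_{L^2(\Omega)},
\end{equation*}
whose sum over $j=0,\dots,k-1$ reproduces the monomial line of the target estimate. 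For group (iii), since $J^m$ of $t^{\mu}\ast \frac{t^{k-1}}{(k-1)!}\ast f^{(k)}$ equals $\frac{t^{k+m-1}}{(k+m-1)!}\ast t^{\mu}\ast f^{(k)}$ by the semigroup property of the fractional integral, Lemma \ref{addLemma4.6} applies directly and supplies the term $c\tau^k\int_0^{t_n}(t_n-s)^{\alpha-1}s^{\mu}\ast\|f^{(k)}(s)\|_{L^2(\Omega)}\,ds$.

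The argument is essentially bookkeeping rather than a new analytic step, but two minor points deserve attention. First, for particular choices of $\mu$ and $j$ the exponent $\mu+j+1$ may be a positive integer, in which case Theorem \ref{addtheorema4.1} routes its conclusion through the $\mu\in\mathbb{N}$ branch of its own proof (which appeals back to Theorem \ref{addtheorema3.1}); both branches yield identical quantitative bounds, so the distinction is harmless. Second, I must check that the integrability hypothesis on $s^{\mu}\ast\|f^{(k)}(s)\|_{L^2(\Omega)}$ is precisely the one Lemma \ref{addLemma4.6} requires to handle the remainder, which is immediate from the statement of that lemma. Adding the three groups of estimates then yields the announced bound, and the proof is complete.
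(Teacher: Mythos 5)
Your proposal is correct and follows essentially the same route as the paper: the authors likewise split $g=t^{\mu}\ast f$ via the Taylor expansion of $f$ into the singular monomials $\frac{\Gamma(\mu+1)t^{\mu+j+1}}{\Gamma(\mu+j+2)}f^{(j)}(0)$ (handled by Theorem \ref{addtheorema4.1} with shifted exponent $\mu+j+1$) plus the convolution remainder (handled by Lemma \ref{addLemma4.6}), and treat the initial data exactly as in Theorem \ref{addtheorema3.1}. The paper compresses all of this into a one-line citation of Theorem \ref{addtheorema4.1} and Lemma \ref{addLemma4.6}, so your write-up is simply a more explicit version of the same bookkeeping.
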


\begin{proof}
From  Theorem \ref{addtheorema4.1} and  Lemma \ref{addLemma4.6},   the desired result is obtained.
\end{proof}

\subsection{Convergence analysis: Product source function $t^{\mu}  f(t)$, $\mu>-1$}
Let $G(t) = J^{m}g(t)$ with $g(t)=t^{\mu}  f(t)$, $f(t) = \sum_{j=0}^{k-1}  \frac{t^{j}}{j!}f^{(j)}(0) + \frac{t^{k-1}}{(k-1)!} \ast f^{(k)}(t)$.
Then
\begin{equation*}
\begin{split}
G(t) 
=  \sum_{j=0}^{k-1} \frac{ \Gamma(\mu+j+1)t^{\mu+j+m}}{\Gamma(\mu+j+m+1)j!}f^{(j)}(0)  + \frac{t^{m-1}}{\Gamma(m)} \ast h(t)
\end{split}
\end{equation*}
with $h(t)= t^{\mu} \left( \frac{t^{k-1}}{(k-1)!} \ast  f^{(k)}(t) \right)$.

\begin{lemma}\label{lem5.3ad}
Let $h(t)= t^{\mu} \left( \frac{t^{k-1}}{(k-1)!} \ast  f^{(k)}(t) \right) $ with $ \mu >-1$ and
$$f \in C^{k-1}([0,T]; L^{2}(\Omega)),~ \int_{0}^{t} \left\| f^{(k)}(s) \right\|_{ {L^2(\Omega)}} ds<\infty,~\int_{0}^{t} s^{\mu}  \left\| f^{(k)}(s) \right\|_{ {L^2(\Omega)}}ds < \infty.$$
 Then the following error estimate holds
 \begin{equation*}
\left\| h^{(k-1)}(0) \right\|_{ {L^2(\Omega)}} \leq  c\int_{0}^{t} s^{\mu}\left\| f^{(k)}(s) \right\|_{ {L^2(\Omega)}} ds,~~
h^{(l)}(0)=0~~\forall l\leq k-2,~~2\leq k\leq 6,
\end{equation*}
and
 \begin{equation*}
\left\| h^{(k-1)}(0) \right\|_{ {L^2(\Omega)}} \leq  c\int_{0}^{t} s^{\mu}\left\| f^{(k)}(s) \right\|_{ {L^2(\Omega)}} ds,~~k=1.
\end{equation*}

\end{lemma}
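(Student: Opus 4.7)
The strategy is to exploit the two sources of vanishing of $h(t) = t^{\mu}\tilde{f}(t)$ near $t=0$, where $\tilde{f}(t) := \frac{t^{k-1}}{(k-1)!} \ast f^{(k)}(t)$. On the one hand, repeated integration of an $L^{1}$ function starting from $0$ yields $\tilde{f}(t) = O(t^{k})$ with $\tilde{f}^{(j)}(0) = 0$ for $0\le j\le k-1$; on the other hand, $(t^{\mu})^{(j)}$ is only singular like $t^{\mu-j}$, which by the assumption $\mu>-1$ is beaten by the zero of $\tilde{f}^{(l-j)}$ for $l\le k-1$. The two cancellations together give $h^{(l)}(0)=0$ for every $l\le k-1$, which is essentially what the lemma records.

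For $k\ge 2$, the plan is as follows. First, differentiation under the integral gives, for $0\le j\le k-1$, $\tilde{f}^{(j)}(t) = \int_{0}^{t} \frac{(t-s)^{k-1-j}}{(k-1-j)!} f^{(k)}(s)\,ds$, and therefore $\|\tilde{f}^{(j)}(t)\|_{L^{2}(\Omega)} \le \frac{t^{k-1-j}}{(k-1-j)!}\int_{0}^{t}\|f^{(k)}(s)\|_{L^{2}(\Omega)}\,ds$. For $t>0$ the product $t^{\mu}\tilde{f}(t)$ is classically smooth, so the Leibniz rule yields $h^{(l)}(t)=\sum_{j=0}^{l}\binom{l}{j}(\mu)_{j}\,t^{\mu-j}\,\tilde{f}^{(l-j)}(t)$ with $(\mu)_{j}=\mu(\mu-1)\cdots(\mu-j+1)$. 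Plugging in the previous bound gives $\|h^{(l)}(t)\|_{L^{2}(\Omega)} \le C_{l}\,t^{\mu+k-1-l}\int_{0}^{t}\|f^{(k)}(s)\|_{L^{2}(\Omega)}\,ds$. For $l\le k-2$, the exponent $\mu+k-1-l\ge \mu+1>0$, so this vanishes as $t\to 0^{+}$ and $h^{(l)}(0)=0$. For $l=k-1$ the exponent is only $\mu$, but combining with the vanishing integral still gives $\|h^{(k-1)}(t)\|_{L^{2}(\Omega)} = O(t^{\mu+1})\to 0$, hence $h^{(k-1)}(0)=0$; the stated inequality $\|h^{(k-1)}(0)\|_{L^{2}(\Omega)}\le c\int_{0}^{t}s^{\mu}\|f^{(k)}(s)\|_{L^{2}(\Omega)}\,ds$ then follows trivially since its left-hand side is $0$ while the right-hand side is nonnegative for any $t>0$.

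For $k=1$ no Leibniz expansion is needed: directly $h(t)=t^{\mu}\bigl(f(t)-f(0)\bigr)=t^{\mu}\int_{0}^{t}f^{(1)}(s)\,ds$, so $\|h(t)\|_{L^{2}(\Omega)}\le t^{\mu}\int_{0}^{t}\|f^{(1)}(s)\|_{L^{2}(\Omega)}\,ds$. Since $\mu>-1$ this is $O(t^{\mu+1})$, hence $h(0)=0$ and the claimed bound again holds trivially.

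The main technical care lies in the Leibniz expansion when $\mu$ is not a nonnegative integer: $(t^{\mu})^{(j)}$ is singular at $t=0$, so one performs the expansion at $t>0$ where the product is classically smooth and then passes to the limit $t\to 0^{+}$, relying on the fact that the order of vanishing of $\tilde{f}^{(l-j)}(t)$ strictly exceeds the order of the singularity of $t^{\mu-j}$ for every admissible pair $(l,j)$. The same computation, done at a generic $t>0$, moreover gives the explicit rate of vanishing of $h^{(k-1)}(t)$, which is exactly what is needed to justify the Taylor-with-integral-remainder representation of $\frac{t^{m-1}}{\Gamma(m)}\ast h(t)$ employed in the subsequent error analysis.
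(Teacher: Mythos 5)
Your overall strategy (Leibniz expansion of $h^{(l)}$, the representation $\tilde f^{(j)}(t)=\int_0^t\frac{(t-s)^{k-1-j}}{(k-1-j)!}f^{(k)}(s)\,ds$, and the term-by-term bound $\|h^{(l)}(t)\|_{L^2(\Omega)}\le C_l\,t^{\mu+k-1-l}\int_0^t\|f^{(k)}(s)\|_{L^2(\Omega)}\,ds$) is exactly the paper's, and your treatment of $l\le k-2$ is correct. However, there is a genuine gap at the critical case $l=k-1$. You assert that $\|h^{(k-1)}(t)\|_{L^2(\Omega)}=O(t^{\mu+1})$, which would require $\int_0^t\|f^{(k)}(s)\|_{L^2(\Omega)}\,ds=O(t)$, i.e.\ essential boundedness of $f^{(k)}$ near $t=0$. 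The hypotheses only give $f^{(k)}\in L^1$, so this integral is merely $o(1)$, and for $-1<\mu<0$ the product $t^{\mu}\int_0^t\|f^{(k)}(s)\|\,ds$ need not tend to zero from $L^1$ integrability alone (e.g.\ $\|f^{(k)}(s)\|\sim s^{-\mu-1}$ keeps it bounded away from $0$). Consequently your claim that $h^{(k-1)}(0)=0$, and hence that the stated inequality is ``trivial,'' is not justified by the argument you give; the same unjustified $O(t^{\mu+1})$ step recurs in your $k=1$ case.

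The missing ingredient is the third hypothesis, $\int_0^t s^{\mu}\|f^{(k)}(s)\|_{L^2(\Omega)}\,ds<\infty$, which you never invoke. The paper's proof handles $-1<\mu<0$ by pulling $t^{\mu}$ under the convolution integral: each Leibniz term is bounded by $t^{\mu-j}\cdot t^{j}\int_0^t\|f^{(k)}(t-s)\|\,ds = \int_0^t t^{\mu}\|f^{(k)}(t-s)\|\,ds\le\int_0^t s^{\mu}\|f^{(k)}(s)\|\,ds$, using the monotonicity $t^{\mu}\le w^{\mu}$ for $0<w\le t$ when $\mu<0$. This yields directly the displayed estimate $\|h^{(k-1)}(t)\|_{L^2(\Omega)}\le c\int_0^t s^{\mu}\|f^{(k)}(s)\|_{L^2(\Omega)}\,ds$ for every $t$, which is the form actually needed in Lemma \ref{Lemma5.7} (where it is multiplied by $t_n^{\alpha-1}$ and absorbed into a Beta-type integral). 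If you want to retain your ``left-hand side is zero'' shortcut, you must still route through this inequality and then appeal to absolute continuity of $t\mapsto\int_0^t s^{\mu}\|f^{(k)}(s)\|\,ds$; without the weighted integrability assumption the conclusion can fail.
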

\begin{proof}
Let us consider the case $\mu \notin \mathbb{N}$,
since the  result is trivial if $\mu \in \mathbb{N}$.
Using Leibnitz's formula for the $l$-th { {order}}  derivative of the  function $h(t)$, we get
\begin{equation}\label{addeq5.5}
\begin{split}
h^{(l)}(t) = \sum_{j=0}^{l} \binom{l}{j} \frac{\Gamma(\mu+1)}{\Gamma(\mu+1-j)}  t^{\mu-j} \left(  \frac{t^{k-1-l+j}}{\Gamma(k-l+j)}  \ast  f^{(k)}(t) \right)
~~\forall l\leq k-1.
\end{split}
\end{equation}

{ Case 1: $2\leq k\leq  6$. From \eqref{addeq5.5}, we have}
\begin{equation*}
\begin{split}
\left\| h^{(l)}(t) \right\|_{ {L^2(\Omega)}}
&\leq c\sum_{j=0}^{l}  t^{\mu-j} \int_0^ts^{k-1-l+j}\left\| f^{(k)}(t-s) \right\|_{ {L^2(\Omega)}} ds\\
&{ \leq  c\sum_{j=0}^{l}  t^{\mu-j} \int_0^tt^{k-1-l+j}\left\| f^{(k)}(t-s) \right\|_{ {L^2(\Omega)}} ds} \leq c t^{\mu+k-1-l}~~\forall l\leq k-2,
\end{split}
\end{equation*}
and    $h^{(l)}(0)=0$, $\forall l\leq k-2$, $k\geq 2$.

We next estimate the bound of  $h^{(k-1)}(0)$.
From the above inequality, it implies $h^{(k-1)}(0)=0$ for $\mu>0$.
For $-1<\mu<0$, { using \eqref{addeq5.5}, it yields}
\begin{equation*}
\begin{split}
\left\| h^{(k-1)}(t) \right\|_{ {L^2(\Omega)}}
&\leq c\sum_{j=0}^{k-1}  t^{\mu-j} \int_0^tt^{j}\left\| f^{(k)}(t-s) \right\|_{ {L^2(\Omega)}} ds\\
&= c\sum_{j=0}^{k-1}   \int_0^tt^{\mu}\left\| f^{(k)}(t-s) \right\|_{ {L^2(\Omega)}} ds\leq c\int_{0}^{t} s^{\mu}\left\| f^{(k)}(s) \right\|_{ {L^2(\Omega)}} ds.
\end{split}
\end{equation*}

{ Case 2: $k=1$. From the above inequality, the desired result is obtained}.
\end{proof}


\begin{lemma}\label{Lemma5.7}
Let $V(t_{n})$ and $V^{n}$ be the solutions of \eqref{rrfee} and \eqref{2.3}, respectively.
Let $v=0$, $G(t)= \frac{t^{m-1}}{(m-1)!} \ast \left[ t^{\mu} \left( \frac{t^{k-1}}{(k-1)!} \ast  f^{(k)}(t) \right) \right]$, $\mu>-1$ and $f\in C^{k-1}([0,T]; L^{2}(\Omega))$,
$$ \int_{0}^{t} \left\| f^{(k)}(s) \right\|_{ {L^2(\Omega)}} ds<\infty,~
\int_{0}^{t}s^{\frac{\mu-1}{2}} \left\| f^{(k)}(s) \right\|_{ {L^2(\Omega)}} ds<\infty$$
and
$$   \int_{0}^{t} (t-s)^{\alpha-1} s^{\mu}  \left\| f^{(k)}(s) \right\|_{ {L^2(\Omega)}}ds<\infty.$$
Then the following error estimate holds for any $t_n>0$
\begin{equation*}
\begin{split}
&\left\|V(t_{n})-V^{n}\right\|_{ {L^2(\Omega)}}\\
& \leq c\tau^{k} \left(t_n^{\alpha + \mu-1} \int_{0}^{t_{n}} \left\| f^{(k)}(s) \right\|_{ {L^2(\Omega)}}  ds+ t_n^{\alpha + \frac{\mu-1}{2}} \int_{0}^{t_{n}} s^{\frac{\mu-1}{2}}  \left\| f^{(k)}(s) \right\|_{ {L^2(\Omega)}}  ds \right.\\
 &\left.\qquad\quad+ \int_{0}^{t_{n}} (t_n-s)^{\alpha-1} s^{\mu}  \left\| f^{(k)}(s) \right\|_{ {L^2(\Omega)}}ds \right),~~\mu>-1.
\end{split}
\end{equation*}
\end{lemma}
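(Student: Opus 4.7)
I would adapt the proof strategy of Lemma \ref{lemma3.10} to the more delicate source $G(t)=\frac{t^{m-1}}{(m-1)!}\ast h(t)$ by carrying out a Taylor expansion of $h$ at the origin, invoking Lemma \ref{lem5.3ad} to control the boundary behaviour, and then combining the polynomial-piece estimate of Lemma \ref{lemma3.9} with the convolution-remainder estimate proved inside Lemma \ref{lemma3.10}.

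As in the proof of Lemma \ref{lemma3.10}, first write
\[
V(t_{n})-V^{n}=\bigl((\mathscr{E}_{\tau}-\mathscr{E})\ast G\bigr)(t_{n}),
\]
with $\mathscr{E}$ and $\mathscr{E}_{\tau}$ the continuous/discrete kernels from that proof. Substituting the factorization $G(t)=\frac{t^{m-1}}{(m-1)!}\ast h(t)$ turns the task into bounding $\bigl((\mathscr{E}_{\tau}-\mathscr{E})\ast\frac{t^{m-1}}{(m-1)!}\ast h\bigr)(t_{n})$. Now expand $h$ by Taylor's formula at $0$:
\[
h(t)=\sum_{l=0}^{k-1}\frac{t^{l}}{l!}h^{(l)}(0)+\frac{t^{k-1}}{(k-1)!}\ast h^{(k)}(t).
\]
Lemma \ref{lem5.3ad} tells us that $h^{(l)}(0)=0$ for $l\le k-2$ and that the single surviving boundary value satisfies $\|h^{(k-1)}(0)\|_{L^{2}(\Omega)}\le c\int_{0}^{t_{n}}s^{\mu}\|f^{(k)}(s)\|_{L^{2}(\Omega)}\,ds$. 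Substituting back into $G$ produces the splitting
\[
G(t)=\frac{t^{k+m-1}}{(k+m-1)!}\,h^{(k-1)}(0)+\frac{t^{k+m-1}}{(k+m-1)!}\ast h^{(k)}(t).
\]

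For the polynomial piece, apply Lemma \ref{lemma3.9} with $l=k+m-1$ to get a contribution bounded by $\bigl(c\tau^{k+m}t_{n}^{\alpha-m-1}+c\tau^{k}t_{n}^{\alpha-1}\bigr)\|h^{(k-1)}(0)\|_{L^{2}(\Omega)}$ (or its even-$l$ analogue). For $t_{n}\ge \tau$ the first summand is dominated by the second, yielding $c\tau^{k}t_{n}^{\alpha-1}\|h^{(k-1)}(0)\|$. Inserting Lemma \ref{lem5.3ad} and then applying two complementary controls of $\int_{0}^{t_{n}}s^{\mu}\|f^{(k)}(s)\|\,ds$ --- on one hand the crude bound $s^{\mu}\le t_{n}^{\mu}$, on the other the interpolation $s^{\mu}=s^{(\mu-1)/2}\cdot s^{(\mu+1)/2}\le t_{n}^{(\mu+1)/2}s^{(\mu-1)/2}$ --- produces exactly the first two terms of the claimed estimate. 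For the convolution remainder, invoke the key bound established inside the proof of Lemma \ref{lemma3.10},
\[
\left\|\left((\mathscr{E}_{\tau}-\mathscr{E})\ast\frac{t^{k+m-1}}{(k+m-1)!}\right)(t)\right\|\le c\tau^{k}t^{\alpha-1},
\]
together with the Leibniz-based pointwise estimate $\|h^{(k)}(s)\|_{L^{2}(\Omega)}\le c\, s^{\mu}\|f^{(k)}(s)\|_{L^{2}(\Omega)}$ obtained by differentiating $h=t^{\mu}\phi$ term-by-term (the dominant surviving term being $\mu$-singular). Convolving then gives the third term $c\tau^{k}\int_{0}^{t_{n}}(t_{n}-s)^{\alpha-1}s^{\mu}\|f^{(k)}(s)\|\,ds$. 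Summing the three contributions by the triangle inequality delivers the stated bound.

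The main obstacle is making the Taylor expansion rigorous: when $\mu\notin\mathbb{N}$ and $\mu<k-1$, the product $h(t)=t^{\mu}\phi(t)$ is not classically $k$-times differentiable at $0$, so ``$h^{(k)}$'' must be interpreted through the Leibniz expansion valid only for $t>0$, and the convolution representation of the remainder requires a careful limiting argument near the origin (mirroring the $\mu\notin\mathbb{N}$ analysis in the proof of Lemma \ref{lem5.3ad}). The appearance of the non-standard exponent $(\mu-1)/2$ in the second term is the most delicate bookkeeping step: it arises as the geometric mean balancing the $t_{n}^{-1}$ factor surviving from the polynomial-piece bound against the $s^{\mu}$ weight produced by Lemma \ref{lem5.3ad}, and placing this interpolation at exactly the right step of the estimate is the subtle point of the proof.
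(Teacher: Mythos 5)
Your overall architecture matches the paper's: split $G(t)=\frac{t^{k+m-1}}{(k+m-1)!}h^{(k-1)}(0)+\frac{t^{k+m-1}}{(k+m-1)!}\ast h^{(k)}(t)$ via Lemma \ref{lem5.3ad}, treat the polynomial piece with Lemma \ref{lemma3.9} and the remainder with the machinery of Lemma \ref{lemma3.10}. Your handling of the polynomial piece is sound (the paper absorbs $t_n^{\alpha-1}\|h^{(k-1)}(0)\|_{L^2(\Omega)}$ into the third term using $t_n^{\alpha-1}\le (t_n-s)^{\alpha-1}$, whereas you redistribute it into the first two terms; either works).

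The genuine gap is your pointwise estimate $\|h^{(k)}(s)\|_{L^{2}(\Omega)}\le c\,s^{\mu}\|f^{(k)}(s)\|_{L^{2}(\Omega)}$, which is false. Leibniz gives $h^{(k)}(t)=t^{\mu}f^{(k)}(t)+\sum_{j=1}^{k}\binom{k}{j}\frac{\Gamma(\mu+1)}{\Gamma(\mu+1-j)}\,t^{\mu-j}\bigl(\frac{t^{j-1}}{\Gamma(j)}\ast f^{(k)}\bigr)(t)$, and each $j\ge1$ term carries the whole history of $f^{(k)}$ on $[0,t]$: its natural pointwise bound is $c\,t^{\mu-1}\int_{0}^{t}\|f^{(k)}(w)\|_{L^{2}(\Omega)}\,dw$, which is not dominated by $t^{\mu}\|f^{(k)}(t)\|_{L^{2}(\Omega)}$ (take $f^{(k)}$ supported near the origin). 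So your convolution remainder does not collapse to the third term alone, and the $j\ge1$ contributions are left unestimated. The paper handles them at the integral level: for $-1<\mu<0$ it writes $s^{\mu-j}(s^{j-1}\ast f^{(k)})(s)=s^{\frac{\mu-1}{2}}\int_{0}^{s}s^{\frac{\mu-1}{2}}\frac{(s-w)^{j-1}}{s^{j-1}}f^{(k)}(w)\,dw$, uses $s^{\frac{\mu-1}{2}}\le w^{\frac{\mu-1}{2}}$ and $(s-w)^{j-1}/s^{j-1}\le 1$, and then the Beta integral $\int_{0}^{t_{n}}(t_{n}-s)^{\alpha-1}s^{\frac{\mu-1}{2}}ds=B(\alpha,\tfrac{\mu+1}{2})\,t_{n}^{\alpha+\frac{\mu-1}{2}}$; for $\mu>0$ a cruder bound yields the $t_{n}^{\alpha+\mu-1}$ term. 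These Leibniz terms are where the first two summands of the stated estimate actually originate. Your intuition that the exponent $\frac{\mu-1}{2}$ arises from a geometric-mean balancing is correct, but the balancing must be performed on these convolution terms inside the $(t_n-s)^{\alpha-1}$ integral, not (only) on the polynomial piece.
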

\begin{proof}
Let us consider the case $\mu \notin \mathbb{N}$,
since the  result is trivial if $\mu \in \mathbb{N}$.
Let $h(t)= t^{\mu} \left( \frac{t^{k-1}}{(k-1)!} \ast  f^{(k)}(t) \right)$.
From  Lemma \ref{lem5.3ad}, we have
\begin{equation*}
G(t)= \frac{t^{m-1}}{(m-1)!} \ast h(t)  = \frac{t^{k+m-1}}{(k+m-1)!} h^{(k-1)}(0) +  \frac{t^{k+m-1}}{(k+m-1)!} \ast h^{(k)}(t).
\end{equation*}
According to  Theorem \ref{addtheorema4.1} and Lemma \ref{lemma3.10}, it yields  
\begin{equation*}
\begin{split}
\left\|V(t_{n})\!-\!V^{n}\right\|_{ {L^2(\Omega)}}
& \leq c \tau^{k} \left(  t_{n}^{\alpha-1}\left\| h^{(k-1)}(0) \right\|_{ {L^2(\Omega)}}  \!+\! \int_{0}^{t_{n}} (t_n-s)^{\alpha-1} \left\|  h^{(k)}(s) \right\|_{ {L^2(\Omega)}}\!ds \right)
\end{split}
\end{equation*}
with
\begin{equation*}
\begin{split}
h^{(k)}(t) = \sum_{j=1}^{k} \binom{k}{j} \frac{\Gamma(\mu+1)}{\Gamma(\mu+1-j)}  t^{\mu-j} \left(  \frac{t^{j-1}}{\Gamma(j)}  \ast  f^{(k)}(t) \right)
+t^\mu f^{(k)}(t).
\end{split}
\end{equation*}

Since
\begin{equation*}
\begin{split}
& \sum_{j=1}^{k} \int_{0}^{t_{n}} (t_n-s)^{\alpha-1} \left\|  s^{\mu-j} \left( s^{j-1} \ast  f^{(k)}(s) \right) \right\|_{ {L^2(\Omega)}}ds\\
&\quad =  \sum_{j=1}^{k}\int_{0}^{t_{n}} (t_n-s)^{\alpha-1} s^{\frac{\mu-1}{2}} \left\|  \int_{0}^{s}   s^{\frac{\mu-1}{2}} \frac{(s-w)^{j-1}}{ s^{j-1}}  f^{(k)}(w) dw \right\|_{ {L^2(\Omega)}}ds \\
&\quad \leq  k \int_{0}^{t_{n}} (t_n-s)^{\alpha-1} s^{\frac{\mu-1}{2}} \int_{0}^{t_{n}}  w^{\frac{\mu-1}{2}} \left\| f^{(k)}(w) \right\|_{ {L^2(\Omega)}} dw ds \\
&\quad= kB(\alpha, (\mu+1)/2) t_n^{\alpha + \frac{\mu-1}{2}} \int_{0}^{t_{n}} w^{\frac{\mu-1}{2}} \left\| f^{(k)}(w) \right\|_{ {L^2(\Omega)}}  dw,~~  -1<\mu<0,
\end{split}
\end{equation*}
where we use
\begin{equation*}
\int_{0}^{t_{n}} (t_n-s)^{\alpha-1} s^{\frac{\mu-1}{2}} ds = t_n^{\alpha + \frac{\mu -1}{2}} \int_{0}^{1} (1-s)^{\alpha-1} s^{\frac{\mu-1}{2}} ds
 = B\left(\alpha, \frac{\mu+1}{2}\right) t_n^{\alpha + \frac{\mu -1}{2}}.
\end{equation*}
Similar, we can estimate
\begin{equation*}
\begin{split}
& \sum_{j=1}^{k} \int_{0}^{t_{n}} (t_n-s)^{\alpha-1} \left\|  s^{\mu-j} \left( s^{j-1} \ast  f^{(k)}(s) \right) \right\|_{ {L^2(\Omega)}}ds\\
& \quad \leq kB(\alpha, \mu) t_n^{\alpha + \mu-1} \int_{0}^{t_{n}} \left\| f^{(k)}(w) \right\|_{ {L^2(\Omega)}}  dw,  ~~\mu>0.
\end{split}
\end{equation*}
On the other hand, we have
\begin{equation*}
\begin{split}
\int_{0}^{t_{n}} (t_n-s)^{\alpha-1} \left\|  s^{\mu}   f^{(k)}(s) \right\|_{ {L^2(\Omega)}}ds\leq  \int_{0}^{t_{n}} (t_{n}-s)^{\alpha-1}  s^{\mu} \left\| f^{(k)}(s) \right\|_{ {L^2(\Omega)}} ds,~~\mu>-1,
\end{split}
\end{equation*}
and
\begin{equation*}
\begin{split}
 t_{n}^{\alpha-1} \left\| h^{(k-1)}(0) \right\|_{ {L^2(\Omega)}}
 &\leq c t_{n}^{\alpha-1}   \int_{0}^{t_n} s^{\mu}\left\| f^{(k)}(s) \right\|_{ {L^2(\Omega)}} ds\\
 &\leq c \int_{0}^{t_{n}} (t_{n}-s)^{\alpha-1}  s^{\mu} \left\| f^{(k)}(s) \right\|_{ {L^2(\Omega)}} ds,~\mu>-1.
 \end{split}
\end{equation*}
By the triangle inequality, the desired result is obtained.
\end{proof}
\begin{theorem}\label{Theorem5.8}
Let $V(t_{n})$ and $V^{n}$ be the solutions of \eqref{rrfee} and \eqref{2.3}, respectively. Let $v\in L^{2}(\Omega)$, $g(t)= t^{\mu}  f(t)$, $\mu> -1$ and $f\in C^{k-1}([0,T]; L^{2}(\Omega))$,
$$ \int_{0}^{t} \left\| f^{(k)}(s) \right\|_{ {L^2(\Omega)}} ds<\infty,~
\int_{0}^{t}s^{\frac{\mu-1}{2}} \left\| f^{(k)}(s) \right\|_{ {L^2(\Omega)}} ds<\infty$$
and
$$   \int_{0}^{t} (t-s)^{\alpha-1} s^{\mu}  \left\| f^{(k)}(s) \right\|_{ {L^2(\Omega)}}ds<\infty.$$
Then the following error estimate holds for any $t_n>0$
\begin{equation*}
\begin{split}
&\left\|V^{n}-V(t_{n})\right\|_{ {L^2(\Omega)}}\\
& \leq  \left\|J_v  \right\|_{ {L^2(\Omega)}}
 +   \sum_{j=0}^{k-1} \left(  c\tau^{\mu + j + m + 1} t_{n}^{\alpha - m - 1} + c\tau^{k} t_{n}^{\alpha+ \mu + j -k} \right) \left\| f^{(j)}(0) \right\|_{ {L^2(\Omega)}}  \\
&\quad + c\tau^{k} \left[t_n^{\alpha + \mu-1} \!\!\int_{0}^{t_{n}} \left\| f^{(k)}(s) \right\|_{ {L^2(\Omega)}}  ds+ t_n^{\alpha + \frac{\mu-1}{2}}\!\! \int_{0}^{t_{n}} s^{\frac{\mu-1}{2}}  \left\| f^{(k)}(s) \right\|_{ {L^2(\Omega)}}  ds \right.\\
 &\left.\qquad \quad\quad+ \int_{0}^{t_{n}} (t_n-s)^{\alpha-1} s^{\mu}  \left\| f^{(k)}(s) \right\|_{ {L^2(\Omega)}}ds \right]
\end{split}
\end{equation*}
with $\left\|J_v  \right\|_{ {L^2(\Omega)}}$  { in \eqref{initialesti}}.
\end{theorem}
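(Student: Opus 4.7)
The plan is to decompose the source $g(t)=t^{\mu}f(t)$ by Taylor-expanding $f$ around $t=0$, and then reduce everything to results already proved, exactly mirroring the strategy used in Theorem \ref{addtheorema4.2} for the convolution case. Writing
\begin{equation*}
f(t) \;=\; \sum_{j=0}^{k-1} \frac{t^{j}}{j!} f^{(j)}(0) \;+\; \frac{t^{k-1}}{(k-1)!} \ast f^{(k)}(t),
\end{equation*}
we obtain
\begin{equation*}
g(t) \;=\; \sum_{j=0}^{k-1} \frac{t^{\mu+j}}{j!}\,f^{(j)}(0) \;+\; t^{\mu}\!\left(\frac{t^{k-1}}{(k-1)!} \ast f^{(k)}(t)\right),
\end{equation*}
which, after applying $J^{m}$, produces precisely the splitting of $G(t)$ written just before Lemma \ref{lem5.3ad}: singular monomial terms of the form $\frac{\Gamma(\mu+j+1)\,t^{\mu+j+m}}{\Gamma(\mu+j+m+1)\,j!}\,f^{(j)}(0)$ plus the ``smooth'' remainder $\frac{t^{m-1}}{(m-1)!}\ast h(t)$ with $h(t)=t^{\mu}(\frac{t^{k-1}}{(k-1)!}\ast f^{(k)}(t))$.

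Next I would exploit linearity of the error $V(t_n)-V^{n}$ with respect to $(v,g)$, which is transparent from the Laplace-domain representations \eqref{LT} and \eqref{DLT}. The contribution of the initial datum $v$ is handled exactly as in Theorem \ref{addtheorema4.1}, yielding the $\|J_{v}\|_{L^{2}(\Omega)}$ bound from \eqref{initialesti}. For each singular monomial $\frac{t^{\mu+j}}{j!}f^{(j)}(0)$, I would invoke Theorem \ref{addtheorema4.1} with the formal replacements $\mu \leftarrow \mu+j$ and $q \leftarrow f^{(j)}(0)/j!$ and with $v=0$; summing the resulting $c\tau^{\mu+j+m+1} t_{n}^{\alpha-m-1}$ and $c\tau^{k} t_{n}^{\alpha+\mu+j-k}$ bounds over $j=0,\ldots,k-1$ reproduces the middle line of the claimed estimate. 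Finally, for the remainder piece $G(t)=\frac{t^{m-1}}{(m-1)!}\ast h(t)$, Lemma \ref{Lemma5.7} applies verbatim and supplies the last bracketed expression of the theorem.

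Gathering the three contributions by the triangle inequality gives the stated bound, so no new estimate has to be produced. The one point needing a brief comment is the case $\mu\in\mathbb{N}$, but then $g\in C^{k-1}([0,T];L^{2}(\Omega))$ and Theorem \ref{addtheorema3.1} already delivers a sharper result, so this case may be dispatched by reduction; this is exactly the same remark used at the top of the proof of Lemma \ref{lem5.3ad}. The main technical obstacle is therefore only bookkeeping, namely verifying that the exponents $\mu+j+m+1$ and $\alpha+\mu+j-k$ arising from Theorem \ref{addtheorema4.1} with shifted $\mu$ match the statement precisely; once this is confirmed, combining Theorem \ref{addtheorema4.1} with Lemma \ref{Lemma5.7} completes the proof.
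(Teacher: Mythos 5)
Your proposal is correct and follows essentially the same route as the paper: the paper also Taylor-expands $f$, treats each singular monomial $\frac{t^{\mu+j}}{j!}f^{(j)}(0)$ via Theorem \ref{addtheorema4.1} with $\mu$ shifted to $\mu+j$, handles the remainder $\frac{t^{m-1}}{(m-1)!}\ast h(t)$ with Lemma \ref{Lemma5.7}, and treats $v$ as in Theorem \ref{addtheorema3.1}. The exponent bookkeeping you flag does check out, so nothing further is needed.
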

\begin{proof}
According to Theorem \ref{addtheorema4.1},  Lemma \ref{Lemma5.7}, and similar treatment of the initial data $v$ in Theorem  \ref{addtheorema3.1},   the desired result is obtained.
\end{proof}

\section{Numerical experiments}\label{Se:numer}
For the sake of brevity, we { mainly} employ ID$m$-BDF$6$ method in \eqref{2.3} for simulating the  model \eqref{fee},
since the similar numerical results can be performed for  ID$m$-BDF$k$ with $1\leq m\leq k<6$.
We discretize the space direction by the  spectral collocation method  with the Chebyshev-Gauss-Lobatto points \cite{STW:2011}.
The { discrete} $L^2$-norm { ($||\cdot||_{l_2}$)} is used to measure the numerical errors at the terminal time, e.g., $t=t_N=1$.
Since the analytic solution is unknown, the convergence rate of the numerical results are computed by
\begin{equation*}
  {\rm Convergence ~Rate}=\frac{\ln \left(||u^{N/2}-u^{N}||_{ {l_2}}/||u^{N}-u^{2N}||_{ {l_2}}\right)}{\ln 2}.
\end{equation*}
All the numerical experiments are programmed in Julia 1.8.5.
{ One message is that multiple-precision floating-point computation is necessary, in order to reduce the round-off errors in evaluating.}

Let $T=1$ and $\Omega=(-1, 1)$. Consider the following { two} examples:
\begin{description}
  \item[(a)] $v(x)=\sin(x)\sqrt{1-x^2} $  and $g(x,t)= 0.$
  \item[(b)] $v(x)=\sin(x)\sqrt{1-x^2}$  and $ g(x,t)= (1+t^{\mu}) \circ (e^t+1) e^x \left(1+\chi_{\left(0,1\right)}\left(x\right)\right).$
\end{description}

Here  $G(x,t)=J^mg(x,t)=\frac{t^{m-1}}{\Gamma(m)}\ast g(x,t)$ in \eqref{smoothing1}  are calculated by JacobiGL Algorithm \cite{ChD:15,Hesthaven:07},
which is generating the nodes and weights of Gauss-Lobatto integral with  the weighting function such as $(1-t)^\mu$ or $(1+t)^\mu$.

\begin{table}[!ht]
{\footnotesize \begin{center}
\caption{Case {\bf(a)}: convergent order of ID$m$-BDF$6$.}
{ \begin{tabular}{|l| l l l |l l l|} \hline
         \multirow{2}{*}{$m$}  &             & $\alpha=0.3$&             &             & $\alpha=0.7$&               \\ \cline{2-7}
                               &  ${ N=200}$    & ${ N=400}$     & ${ N=800}$     &  ${ N=200}$    & ${ N=400}$     & ${ N=800}$       \\ \hline
         \multirow{2}{*}{1}    & 2.8649e-08  & 7.1623e-09  & 1.7905e-09  & 5.8503e-08  & 1.4626e-08  & 3.6565e-09    \\
                               &             & 1.9999      & 2.0000      &             & 1.9999      & 2.0000        \\  \hline
         \multirow{2}{*}{2}    & 3.4067e-13  & 2.0551e-14  & 1.2734e-15  & 1.0704e-12  & 6.3277e-14  & 3.9010e-15    \\
                               &             & 4.0511      & 4.0124      &             & 4.0804      & 4.0197        \\ \hline
         \multirow{2}{*}{3}    & 8.5708e-14  & 6.4330e-15  & 4.1808e-16  & 2.2754e-13  & 1.9161e-14  & 1.2708e-15    \\
                               &             & 3.7358      & 3.9436      &             & 3.5699      & 3.9143        \\ \hline
         \multirow{2}{*}{4}    & 2.9657e-14  & 4.4445e-16  & 6.8021e-18  & 1.3125e-13  & 1.9603e-15  & 2.9951e-17    \\
                               &             & 6.0602      & 6.0298      &             & 6.0651      & 6.0323        \\  \hline
         \multirow{2}{*}{5}    & 3.6694e-14  & 5.4991e-16  & 8.4162e-18  & 1.5877e-13  & 2.3712e-15  & 3.6228e-17    \\
                               &             & 6.0602      & 6.0298      &             & 6.0652      & 6.0323        \\ \hline
         \multirow{2}{*}{6}    & 4.3721e-14  & 6.5521e-16  & 1.0027e-17  & 1.8626e-13  & 2.7815e-15  & 4.2496e-17    \\
                               &             & 6.0602      & 6.0298      &             & 6.0652      & 6.0324        \\ \hline
\end{tabular}\label{table:11}}
\end{center}}
\end{table}

\begin{table}[!ht]
{\footnotesize \begin{center}
\caption{Case {\bf(b)} with   convolution: convergent order of ID$m$-BDF$6$.}
{ \begin{tabular}{|l| l l l |l l l|} \hline
         \multirow{2}{*}{$m$}  &  \multicolumn{3}{c|}{$\alpha=0.3, \mu=-0.4$}  &  \multicolumn{3}{c|}{$\alpha=0.7, \mu=0.3$}     \\ \cline{2-7}
                              &  ${ N=200}$    & ${ N=400}$     & ${ N=800}$     &  ${ N=200}$    & ${ N=400}$     & ${ N=800}$       \\ \hline
         \multirow{2}{*}{1}    & 2.8489e-08    & 7.1310e-09    & 1.7848e-09    & 5.8505e-08    & 1.4626e-08    & 3.6565e-09     \\
                               &               & 1.9982        & 1.9982        &               & 2.0000        & 2.0000         \\  \hline
         \multirow{2}{*}{2}    & 5.2663e-12    & 4.2479e-13    & 3.4655e-14    & 1.3146e-12    & 8.0904e-14    & 5.0214e-15     \\
                               &               & 3.6319        & 3.6156        &               & 4.0222        & 4.0100         \\ \hline
         \multirow{2}{*}{3}    & 1.4691e-13    & 7.0242e-15    & 4.2499e-16    & 2.4212e-13    & 1.8753e-14    & 1.2629e-15     \\
                               &               & 4.3865        & 4.0468        &               & 3.6905        & 3.8923         \\ \hline
         \multirow{2}{*}{4}    & 1.2361e-13    & 1.8343e-15    & 2.7925e-17    & 2.6971e-13    & 4.0850e-15    & 6.2856e-17     \\
                               &               & 6.0743        & 6.0375        &               & 6.0449        & 6.0221         \\  \hline
         \multirow{2}{*}{5}    & 1.5665e-13    & 2.3252e-15    & 3.5406e-17    & 3.3626e-13    & 5.0941e-15    & 7.8391e-17     \\
                               &               & 6.0187        & 6.0093        &               & 6.0446        & 6.0220         \\ \hline
         \multirow{2}{*}{6}    & 1.8965e-13    & 2.8153e-15    & 4.2870e-17    & 4.0290e-13    & 6.1045e-15    & 9.3946e-17     \\
                               &               & 6.0739        & 6.0372        &               & 6.0444        & 6.0219         \\ \hline
\end{tabular}\label{table:33}}
\end{center}}
\end{table}

\begin{table}[!ht]
{\footnotesize \begin{center}
\caption{Case {\bf(b)} with   product: convergent order of ID$m$-BDF$6$.}
{ \begin{tabular}{|l| l l l |l l l|} \hline
         \multirow{2}{*}{$m$}  & \multicolumn{3}{c|}{$\alpha=0.3, \mu=-0.4$}  &  \multicolumn{3}{c|}{$\alpha=0.7, \mu=0.3$}    \\ \cline{2-7}
                              &  ${ N=200}$    & ${ N=400}$     & ${ N=800}$     &  ${ N=200}$    & ${ N=400}$     & ${ N=800}$       \\ \hline
         \multirow{2}{*}{1}    & 5.6978e-06    & 1.8615e-06    & 6.0971e-07   & 7.0952e-07    & 1.7474e-07    & 4.3146e-08     \\
                               &               & 1.6138        & 1.61033      &               & 2.0216        & 2.0179         \\  \hline
         \multirow{2}{*}{2}    & 4.7855e-09    & 7.9295e-10    & 1.3108e-10   & 6.1995e-11    & 5.8815e-12    & 5.7317e-13     \\
                               &               & 2.5933        & 2.59671      &               & 3.3979        & 3.35914        \\ \hline
         \multirow{2}{*}{3}    & 1.7095e-11    & 1.3407e-12    & 1.0870e-13   & 2.5149e-12    & 1.7869e-13    & 1.1382e-14     \\
                               &               & 3.6725        & 3.62449      &               & 3.8149        & 3.97267        \\ \hline
         \multirow{2}{*}{4}    & 9.9716e-13    & 1.4047e-14    & 1.7852e-16   & 7.2236e-13    & 1.0853e-14    & 1.6622e-16     \\
                               &               & 6.1494        & 6.2981       &               & 6.0565        & 6.02881        \\  \hline
         \multirow{2}{*}{5}    & 1.3010e-12    & 1.9509e-14    & 2.9866e-16   & 9.2805e-13    & 1.3945e-14    & 2.1371e-16     \\
                               &               & 6.0593        & 6.02948      &               & 6.0562        & 6.02803        \\ \hline
         \multirow{2}{*}{6}    & 1.5714e-12    & 2.3564e-14    & 3.6077e-16   & 1.1330e-12    & 1.7023e-14    & 2.6085e-16     \\
                               &               & 6.0592        & 6.02938      &               & 6.0565        & 6.02815        \\ \hline
\end{tabular}\label{table:44}}
\end{center}}
\end{table}

\begin{table}[!ht]
{\footnotesize \begin{center}
\caption{{ Case {\bf(b)} with   product: Comparison of Several Methods}.}
{ \begin{tabular}{|l| l l l |l l l|} \hline
\multirow{2}{*}{Scheme}           & \multicolumn{3}{c|}{$\alpha=0.3, \mu=-0.4$}  &  \multicolumn{3}{c|}{$\alpha=0.7, \mu=0.3$}    \\ \cline{2-7}
                               &  ${ N=200}$    & ${ N=400}$     & ${ N=800}$     &  ${ N=200}$    & ${ N=400}$     & ${ N=800}$       \\ \hline
\multirow{2}{*}{BDF2}          & 1.9996e-03    & 1.2957e-03    & 8.4321e-04   & 3.5185e-04    & 1.7321e-04    & 8.5275e-05     \\
                               &               & 0.6259        & 0.6198       &               & 1.0224        & 1.0223         \\  \hline
\multirow{2}{*}{Corr-BDF2}     & NaN           & NaN           & NaN          & 2.6235e-05    & 1.1807e-05    & 5.0842e-06     \\
                               &               &  ---             &  ---            &               & 1.1517        & 1.2156        \\ \hline
\multirow{2}{*}{ID2-BDF2}      & 4.7799e-05    & 1.1962e-05    & 2.9920e-06   & 4.6997e-05    & 1.1794e-05    & 2.9541e-06     \\
                               &               & 1.9985        & 1.9992       &               & 1.9945        & 1.9972         \\  \hline
\multirow{2}{*}{BDF4}          & 2.0020e-03    &1.2964e-03     &8.4336e-04    & 3.5870e-04    &1.7493e-04     &8.5705e-05   \\
                               &               & 0.6269        & 0.6202       &               & 1.0360        & 1.0292         \\  \hline
\multirow{2}{*}{Corr-BDF4}     & NaN           & NaN           & NaN          & NaN           & NaN           & NaN      \\
                               &               & ---         &  ---            &               &  ---             &  ---          \\ \hline
\multirow{2}{*}{ID4-BDF4}      & 1.8228e-09    & 1.1510e-10    & 7.2311e-12   & 2.7000e-09    & 1.6848e-10    & 1.0522e-11     \\
                               &               & 3.9850        & 3.9926       &               & 4.0023        & 4.0011         \\  \hline
\end{tabular}\label{table:adc}}
\end{center}}
\end{table}

Tables  \ref{table:11}   shows that
ID$m$-BDF$k$ with $k=6$ recovers    high-order convergence and this is in  agreement { with} Theorem \ref{addtheorema3.1}.
In fact, Table  \ref{table:11} indicates  an optimal error estimate of the Newton-Cotes rule $\mathcal{O}\left(\tau^{\min\{m+1,k\}}\right)$ for odd $m$ and $\mathcal{O}\left(\tau^{\min\{m+2,k\}}\right)$ for even $m$.

For the subdiffusion  model   \eqref{fee}, it  { may involve}    the low regularity or weakly singular  source terms \cite{Doerries:22,Maryshev:09,SBMB:03,SMB:09},  { e.g., }
$$g(x,t)=~t^{\mu}\ast f(x,t)~~{\rm or}~g(x,t)=t^{\mu} f(x,t), ~~\mu>-1.$$
In this case,   many time-stepping methods, including the correction of high-order  BDF schemes \cite{JLZ:2017,SC:2020}, are likely to exhibit a severe order reduction, { see Table \ref{table:adc}}, since it is required  the function $g\in C^{k-1}([0,T];L^{2}(\Omega))$.
{ In fact, for the low regularity  source term   $g(x,t)=t^{\mu}$, $\mu>0$,
the correction of  BDF$2$ (Corr-BDF$2$) scheme converges  with the order $\mathcal{O}(\tau^{1+\mu})$, see Lemma 3.2 in \cite{WZ:2020}.}
To fill in this gap, the desired $k$th-order convergence  rate can be recovered by ID$m$-BDF$k$ method,
 which are characterized by Theorem  \ref{addtheorema4.2}  and Theorem \ref{Theorem5.8}, see Tables  \ref{table:33} and \ref{table:44},  respectively.

\section{Conclusions}
The subdiffusion  models  { can involve}  the singular source term, which  exhibit  a severe order reduction by  many  time stepping methods.
In this work we first derive an  optimal error estimate  of the $k$th-order Newton-Cotes  rule $\mathcal{O}\left(\tau^{\min\{m+1,k\}}\right)$ for odd $m$ and $\mathcal{O}\left(\tau^{\min\{m+2,k\}}\right)$ for even $m$, $1\leq m\leq k\leq 6$,  under  the mild  regularity   of the source function.
Then  the desired $k$th-order convergence { rate} are well developed  by smoothing method  under the certainly singular source term{ s}.
It is interesting to design the numerical algorithms for the nonlinear fractional models.

\end{document}